\documentclass[a4paper]{article}
\pdfoutput=1
\usepackage{graphicx,wrapfig}
\usepackage{lipsum}
\usepackage{amsmath,amssymb,mathrsfs}
\usepackage{hyperref}
\usepackage{url}
\usepackage{mathtools}
\usepackage{changepage}
\usepackage{multirow}
\usepackage{wrapfig}
\usepackage{subfig}
\usepackage{xcolor}
\usepackage{epsfig,enumerate,amsmath,amsfonts,amssymb,amsthm,mathrsfs,ifpdf}
\usepackage{indentfirst,relsize}
\usepackage{setspace,graphicx}
\usepackage{latexsym}
\usepackage[all]{xy}
\usepackage[usenames,dvipsnames]{pstricks}
\usepackage{pst-grad} % For gradients
\usepackage{pst-plot} % For axes
\usepackage[margin = 2.50cm]{geometry}

\usepackage[linesnumbered,ruled,vlined]{algorithm2e}

\usepackage{color,soul}

\usepackage[colorinlistoftodos]{todonotes}

\usepackage{authblk} % to add author affiliations

\usepackage{orcidlink}% for orcid id

\usepackage{enumitem}% http://ctan.org/pkg/enumitem

\usepackage{tikz}
\usetikzlibrary{calc}

\newcommand{\remove}[1]{}

\newtheorem{theorem}{Theorem}%[section]
\newtheorem{lemma}[theorem]{Lemma}

\newtheorem{corollary}[theorem]{Corollary}

\newtheorem{conjecture}{Conjecture}

\newcounter{Case}[theorem]

\newcounter{Ca}[theorem]%write a case inside a case

\newcounter{NCa}[theorem]

\newtheorem{remark}{Remark}

\usepackage{xcolor}
\allowdisplaybreaks

\title{An Improved Upper Bound for the Strong Odd Chromatic Number of Planar Graphs}
\author[]{Kamal Santra \orcidlink{0009-0006-5997-1452} \footnote{kamal.7.2013@gmail.com, kamal.santra@iitg.ac.in}}
\affil[]{Department of Mathematics\\
	
	Indian Institute of Technology Guwahati\\
	
	Guwahati, 781039, Assam, India}
\date{}

\begin{document}

\maketitle
\begin{abstract}
	A proper coloring of a graph is called a strong odd coloring if, for every
	vertex \(v\) and every color appearing in the open neighborhood of \(v\),
	that color appears an odd number of times in \(N(v)\). The corresponding
	minimum number of colors is the strong odd chromatic number, denoted by
	\(\chi_{\mathrm{so}}(G)\). Caro et
	al.~\cite{CaroPetrusevskiSkrekovskiTuzaStrongOdd} proved that every
	planar graph has strong odd chromatic number at most \(388\). Manattu et
	al.~\cite{ManattuVinayLakshmanan2026} later constructed planar graphs
	with strong odd chromatic number \(17\) and asked whether larger values
	are possible and whether the upper bound \(388\) can be improved. We
	address these questions as follows. First, we improve the general upper
	bound by proving that every planar graph \(G\) satisfies
	\(\chi_{\mathrm{so}}(G)\le 368\). This follows by improving the auxiliary
	proper facially odd coloring bound for loopless \(2\)-connected plane
	multigraphs from \(97\) colors to \(92\) colors and combining this with
	the reduction of Caro et al. and the Four Color Theorem. Second, we
	give a different explicit planar construction with
	\(\chi_{\mathrm{so}}(G)=20\), together with a self-contained proof of the
	exact value. We emphasize that Goetze et
	al.~\cite{GoetzeKluteKnauerParadaPenaUeckerdt2025} had already posted an
	arXiv preprint in May 2025 containing a planar example with strong odd
	chromatic number \(20\). Thus our construction is not a priority claim for
	the value \(20\), but rather an independent and fully verified
	construction whose value exceeds \(17\), the value that motivated
	Problem~1 of Manattu et al.
\end{abstract}

{\bf Keywords.}
	Strong odd coloring; Strong odd chromatic number; Planar graph; Proper facially odd coloring; Discharging

\section{Introduction}
\label{sec:introduction}

Graph colorings with constraints on open neighbourhoods form a natural
family of parameters lying between ordinary proper coloring and distance
coloring. A \(k\)-coloring of a graph \(G\) is a map
\(\varphi:V(G)\to\{1,\ldots,k\}\), and it is proper if adjacent vertices
receive distinct colors. A square coloring, or \(2\)-distance coloring, is
a proper coloring of \(G^2\), where two vertices are adjacent in \(G^2\) whenever they are at a distance at most two in \(G\). Between these two
extremes, several neighbourhood- and uniqueness variants have been
studied in recent years.

Odd coloring was introduced by Petru{\v{s}}evski and
{\v{S}}krekovski~\cite{PetrusevskiSkrekovski2022}. A proper coloring of
\(G\) is an odd coloring if every non-isolated vertex \(v\) has at least
one color that appears an odd number of times in \(N_G(v)\). The minimum
number of colors in such a coloring is the odd chromatic number, denoted
by \(\chi_{\mathrm{o}}(G)\). This parameter already behaves quite
differently from the ordinary chromatic number: the inequality
\(\chi(G)\le \chi_{\mathrm{o}}(G)\) can be strict, and determining the
odd chromatic number is computationally hard in
general~\cite{AhnImOum2022,CaroPetrusevskiSkrekovski2022}. Petru{\v{s}}evski
and {\v{S}}krekovski~\cite{PetrusevskiSkrekovski2022} proved that every
planar graph is odd \(9\)-colorable and conjectured that every planar
graph is odd \(5\)-colorable. Petr and Portier~\cite{PetrPortier2023}
improved the general planar upper bound to \(8\). Further results on odd
colorings of planar graphs, sparse graphs, and related graph classes were
obtained in~\cite{CaroPetrusevskiSkrekovski2022,ChoChoiKwonPark2023,Cranston2024,MiaoSunTuYu2024}.

A stronger variant, called strong odd coloring, was introduced by Kwon and
Park~\cite{KwonPark2024}. A proper coloring \(\varphi\) of \(G\) is a
strong odd coloring if, for every vertex \(v\in V(G)\) and every color
\(\alpha\), the color \(\alpha\) appears either zero times or an odd
number of times in \(N_G(v)\). The minimum number of colors in a strong
odd coloring of \(G\) is denoted by \(\chi_{\mathrm{so}}(G)\). Thus, every
strong odd coloring is an odd coloring, and every square coloring is a
strong odd coloring. In particular, for every graph \(G\), one has
\[
\chi(G)\le \chi_{\mathrm{o}}(G)\le \chi_{\mathrm{so}}(G)\le \chi(G^2).
\]
However, \(\chi_{\mathrm{so}}\) is not simply a square-coloring parameter.
Kwon and Park~\cite{KwonPark2024} observed that the difference
\(\chi(G^2)-\chi_{\mathrm{so}}(G)\) can be arbitrarily large, and they
studied strong odd colorings of sparse graphs. They also asked whether
there exists an absolute constant \(C\) such that
\(\chi_{\mathrm{so}}(G)\le C\) for every planar graph \(G\).

Caro et al.~\cite{CaroPetrusevskiSkrekovskiTuzaStrongOdd} answered this
boundedness question affirmatively. Their approach relates strong odd
colorings of planar graphs to proper facially odd colorings of plane
multigraphs. Let \(\mathcal P\) be the class of loopless \(2\)-connected
plane multigraphs, and let \(\chi_{\mathrm{pfo}}(\mathcal P)\) denote
the maximum proper facially odd chromatic number over this class. They
proved that every planar graph \(G\) satisfies
\(\chi_{\mathrm{so}}(G)\le \chi(G)\chi_{\mathrm{pfo}}(\mathcal P)\). The
known bound \(\chi_{\mathrm{pfo}}(\mathcal P)\le 97\), due to Kaiser et al.~\cite{KaiserRuckyStehlikSkrekovski2014},
together with the Four Color Theorem~\cite{RobertsonSandersSeymourThomas1996},
gives \(\chi_{\mathrm{so}}(G)\le 4\cdot97=388\) for every planar graph
\(G\). The same reduction, together with the tight proper facially odd
bound for outerplane graphs due to Wang et al.~\cite{WangFinbowWang2012}, gives the earlier upper bound \(30\) for
outerplanar graphs. Goetze et
al.~\cite{GoetzeKluteKnauerParadaPenaUeckerdt2025} subsequently reduced
the latter bound to \(8\).

The lower-bound side has also developed rapidly. Caro et
al.~\cite{CaroPetrusevskiSkrekovskiTuzaStrongOdd} constructed two planar
graphs with strong odd chromatic number \(12\) and asked whether every
planar graph satisfies \(\chi_{\mathrm{so}}(G)\le 12\). Pang et al.~\cite{PangMiaoFan2026} answered this question negatively by
constructing a planar graph with strong odd chromatic number \(13\), and
they conjectured that every planar graph is strongly odd \(13\)-colorable.
Manattu et al.~\cite{ManattuVinayLakshmanan2026}
disproved that conjecture by constructing infinitely many planar graphs
with strong odd chromatic number \(14\), and also gave examples with
values \(15\), \(16\), and \(17\). Their preprint was posted in February
2026 and ends with three problems, including the problem of identifying
planar graphs with strong odd chromatic number greater than \(17\), and
the problem of reducing the upper bound \(388\).

There is an important chronological point concerning this first problem.
Before the Manattu--Vinay--Lakshmanan preprint was posted, Goetze et al~\cite{GoetzeKluteKnauerParadaPenaUeckerdt2025}
had already posted the arXiv preprint \emph{Strong odd coloring in
	minor-closed classes} in May 2025. In that preprint, they exhibit a
planar graph \(G''\) with \(\chi_{\mathrm{so}}(G'')=20\). They also
write that the analysis of the example is left to the reader. Thus, the
existence of a planar graph with strong odd chromatic number \(20\) was
already available before the later journal submission containing
Problem~1. In the present paper, we therefore do not claim priority for
the value \(20\). Instead, we provide a different explicit construction
and a complete self-contained proof of the exact value.

The main contribution of the present paper is an improved universal upper
bound for planar graphs. We refine the proper facially odd coloring
ingredient used by Caro et al. More precisely, we improve the auxiliary
bound from \(\chi_{\mathrm{pfo}}(\mathcal P)\le 97\) to
\(\chi_{\mathrm{pfo}}(\mathcal P)\le 92\). Combining this with the
reduction of Caro et al. and the Four Color Theorem gives the following
theorem.

\begin{theorem}
	\label{thm:intro-368}
	Every planar graph \(G\) satisfies \(\chi_{\mathrm{so}}(G)\le 368\).
\end{theorem}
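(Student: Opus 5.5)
The proof reduces Theorem~\ref{thm:intro-368} to a single new ingredient: every loopless \(2\)-connected plane multigraph \(M\) admits a proper facially odd coloring with at most \(92\) colors, i.e.\ \(\chi_{\mathrm{pfo}}(\mathcal P)\le 92\). Once this is in hand, the statement follows from the reduction of Caro et al.~\cite{CaroPetrusevskiSkrekovskiTuzaStrongOdd}, \(\chi_{\mathrm{so}}(G)\le \chi(G)\chi_{\mathrm{pfo}}(\mathcal P)\), combined with \(\chi(G)\le 4\) from the Four Color Theorem~\cite{RobertsonSandersSeymourThomas1996}. This gives \(\chi_{\mathrm{so}}(G)\le 4\cdot 92=368\). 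So all the work lies in lowering the constant \(97\) of Kaiser et al.~\cite{KaiserRuckyStehlikSkrekovski2014} to \(92\).

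For the bound \(\chi_{\mathrm{pfo}}(\mathcal P)\le 92\) I will follow the architecture of Kaiser et al. Take a minimal counterexample \(M\), minimal with respect to the number of edges plus vertices, and prove a list of reducible configurations. The reductions are of the standard kinds:
\begin{itemize}[label={}]
\item \emph{Parallel edges and digons:} contract or delete such an edge, using \(2\)-connectivity to keep the class closed. Then extend the coloring, choosing the color of the restored element so that it avoids colors already used on incident faces. It must also fix the parity on each face where the color count could become even.
\item \emph{Small-degree vertices and small faces:} delete or contract, then recolor.
\end{itemize}
Each extension succeeds whenever the number of forbidden colors is below \(92\). The forbidden colors come from the proper condition (adjacent or face-sharing elements, according to the paper's definition) and from the facially odd condition, which costs at most one extra color per incident face. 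Lowering the palette from \(97\) to \(92\) means every such count must be redone with five fewer available colors. Consequently the thresholds for reducibility shift: configurations of low-degree vertices adjacent to small faces that were reducible at \(97\) must be re-examined, and some new configurations must be added.

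The final step is discharging. Assign charge \(\deg(v)-4\) to vertices and \(|f|-4\) to faces, so that by Euler's formula the total charge is \(-8\). Then design rules that move charge from large faces and high-degree vertices to the remaining small elements, and show that every element ends nonnegative given the reducible configurations. This contradiction shows no counterexample exists.

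The main obstacle is the balance between reducibility and discharging near the threshold. With only \(92\) colors, some configurations, such as a vertex of degree around \(3\) incident with several triangular or digonal faces whose neighbors have moderate degree, may no longer be reducible by naive counting. To handle them I will first try a sharper counting argument that exploits coincidences of forbidden colors, for instance the same face being counted twice, or a parity constraint that is automatically satisfied. If that fails, I will use a two-element recoloring, uncoloring a neighbor together with the vertex. Only if both fail will I adjust the discharging rules so that these configurations receive charge from heavier neighbors.
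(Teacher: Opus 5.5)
\textbf{There is a gap: the proposal does not prove the bound $\chi_{\mathrm{pfo}}(\mathcal P)\le 92$, which is the whole content of the theorem.} Your outer reduction matches the paper's: Caro et al.'s inequality $\chi_{\mathrm{so}}(G)\le\chi(G)\chi_{\mathrm{pfo}}(\mathcal P)$ plus the Four Color Theorem reduces the statement to $\chi_{\mathrm{pfo}}(\mathcal P)\le 92$. Beyond that, you list no concrete reducible configuration with its threshold, no discharging rules, and no check that final charges are nonnegative. Your last paragraph is a contingency plan (``if that fails, \dots; only if both fail, \dots''). Nothing in it shows that the $97$-color argument of Kaiser et al.\ can actually be pushed down to $92$, so the one nontrivial step is missing.

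Several of the ingredients you do sketch also point the wrong way.
\begin{itemize}
\item \emph{The cost of re-inserting a vertex.} When a deleted vertex $z$ is put back, its color must be absent from every face incident with $z$, since a color already occurring an odd number of times there would become even. So the forbidden set is the entire facial neighbourhood $N_F(z)$, not ``the proper constraints plus one color per face.''
\item \emph{The right reduction.} The paper uses annihilation: delete $z$ and join consecutive neighbours, so each old face survives with $z$ removed and the graph stays loopless and $2$-connected. This gives $d_F(v)\ge 92$ for every vertex.
\item \emph{Which configurations are hard.} Triangles and digons are not the obstacle, because they keep facial degree small. Digons are reducible outright: restoring a deleted parallel edge needs no recoloring, and contracting one would create a loop. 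The dangerous vertices are $2$-, $3$- and $4$-vertices lying on one face with many $3^+$-vertices.
\end{itemize}

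To handle those vertices the paper needs four things your plan lacks:
\begin{itemize}
\item[(i)] No four consecutive $2$-vertices on a face, proved by contracting a path and alternately recoloring it.
\item[(ii)] The pair reduction $d_F(u,f)+d_F(v,f)\ge 91$ for nonadjacent $u,v$ whose only common face is $f$. It works because giving $u$ and $v$ the same color already present on $f$ changes the count on $f$ by $2$. Your proposed uncoloring of a \emph{neighbour} cannot use this trick, since adjacent vertices share two faces and need distinct colors.
\item[(iii)] A counting lemma bounding $d_F(v)$ by $4\sum_i w'(f_i)-5d$, with a saving of $3$ for each pair of consecutive faces of total modified weight at most $24$.
\item[(iv)] A discharging scheme in which every face with $w'(f)\ge 17$ pays the deficits of its incident vertices. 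This is verified separately for $w'(f)\in\{17,18,19\}$, $w'(f)\in\{20,21\}$ and $w'(f)\ge 22$, using (ii) and a plane-separation lemma that limits how many expensive vertices one face can carry.
\end{itemize}
Your charges $\deg(v)-4$ and $|f|-4$ do sum to $-8$, but without rules and these lemmas they yield no contradiction.
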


This answers Problem~2 of Manattu et al.~\cite{ManattuVinayLakshmanan2026}; in the strict formulation
asking for a constant \(c<388\) such that \(\chi_{\mathrm{so}}(G)<c\)
for every planar graph \(G\), one may take \(c=369\).

We also include an explicit planar construction with strong odd chromatic
number \(20\). As explained above, the value \(20\) was already announced
by Goetze et al.~\cite{GoetzeKluteKnauerParadaPenaUeckerdt2025}. Our
construction is different and short, and we include the full proof. The
graph is built from three paths \(P_{12},P_{13},P_{23}\), each on six
vertices, and three additional vertices \(x_1,x_2,x_3\). The vertices
\(x_1\) and \(x_2\) are adjacent to all vertices of \(P_{12}\), the
vertices \(x_1\) and \(x_3\) are adjacent to all vertices of \(P_{13}\),
and the vertices \(x_2\) and \(x_3\) are adjacent to all vertices of
\(P_{23}\); finally, the edge \(x_1x_2\) is added. We prove that all
\(18\) path vertices must receive pairwise distinct colors, while the
colors of \(x_1\) and \(x_2\) cannot be reused on any path vertex. This
gives the following exact value.

\begin{theorem}
	\label{thm:intro-G20}
	There exists a planar graph \(G\) such that
	\(\chi_{\mathrm{so}}(G)=20\).
\end{theorem}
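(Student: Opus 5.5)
The plan is to handle the two inequalities separately for the graph \(G\) described just before the statement. Write \(P_{12}=u_1\cdots u_6\) and similarly for the other paths. Planarity is immediate: each \(P_{ij}\) together with its two apexes \(x_i,x_j\) can be drawn with \(x_i\) on one side of the path and \(x_j\) on the other. These three drawings can be glued around the vertices \(x_1,x_2,x_3\), and the edge \(x_1x_2\) can then be added inside a face containing both \(x_1\) and \(x_2\).

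For the upper bound \(\chi_{\mathrm{so}}(G)\le 20\), I will give an explicit coloring. Assign colors \(1,\ldots,18\) bijectively to the \(18\) path vertices, and give \(x_1\) color \(19\) and \(x_2\) color \(20\). Then give \(x_3\) the color of some vertex \(u\in P_{12}\); this is allowed since \(x_3\) is not adjacent to \(x_1\), \(x_2\) or any vertex of \(P_{12}\). The coloring is proper by construction. For the strong odd condition, it suffices to check that every open neighborhood is rainbow. Indeed, \(N(x_1)=V(P_{12})\cup V(P_{13})\cup\{x_2\}\) and \(N(x_2)=V(P_{12})\cup V(P_{23})\cup\{x_1\}\) each contain \(u\) exactly once. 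Also, \(N(x_3)=V(P_{13})\cup V(P_{23})\) does not contain \(u\). Finally, a path vertex sees at most two path vertices together with its two apexes, and these all carry distinct colors, because the only repeated color is shared by \(u\) and \(x_3\), and no vertex sees both of them.

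For the lower bound, fix a strong odd coloring and a color \(c\). I will carry out the following steps in order.
\begin{enumerate}
\item Two vertices at distance \(2\) on a path \(P_{ij}\) receive different colors. Otherwise their common path neighbor \(v\) sees \(c\) exactly twice, since the remaining neighbors of \(v\) are \(x_i\) and \(x_j\), which are adjacent to both of these vertices and hence not colored \(c\).
\item Since adjacent path vertices also receive different colors, any two path vertices of color \(c\) on the same path are at distance at least \(3\). On six vertices this allows at most two vertices of color \(c\) per path.
\item Let \(a,b,d\) be the numbers of \(c\)-colored vertices on \(P_{12},P_{13},P_{23}\). Each pair of paths shares an apex, say \(x_k\). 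If both of those paths contain \(c\), then \(x_k\) sees \(c\) on both of them. The only other neighbor of \(x_k\) is possibly \(x_1\) or \(x_2\), and that vertex is adjacent to a \(c\)-colored path vertex except in one configuration, which step 4 treats. So, generically, at most one of \(a,b,d\) is nonzero.
\item Now count \(c\) inside a single path. Since \(a\le 2\), we must exclude \(a=2\), and similarly for \(b\) and \(d\). For \(P_{12}\), the apexes \(x_1,x_2\) are both adjacent to all of \(P_{12}\), so neither can have color \(c\), and \(x_1\) sees \(c\) exactly \(a=2\) times, which is forbidden. For \(P_{13}\) and \(P_{23}\) there is a subtlety: for example, \(x_2\) may have color \(c\), which makes the count at \(x_1\) equal to \(b+1\). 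In that case I use \(x_3\), whose neighborhood \(V(P_{13})\cup V(P_{23})\) contains no \(x\)-vertex, so \(b\) must be odd, hence \(b=1\). The same argument removes the exceptional configurations left over from step 3.
\end{enumerate}
Together these steps show that all \(18\) path vertices have distinct colors.

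It then remains to show that the colors of \(x_1\) and \(x_2\) are distinct from each other and appear on no path vertex. Since \(x_1x_2\) is an edge, \(x_1\neq x_2\). The vertex \(x_1\) is adjacent to all of \(P_{12}\cup P_{13}\). If its color appeared on \(P_{23}\), then \(x_2\) would see that color twice, once on \(x_1\) and once on \(P_{23}\). Symmetrically, \(x_2\) is adjacent to all of \(P_{12}\cup P_{23}\), and if its color appeared on \(P_{13}\), then \(x_1\) would see it twice. This yields at least \(20\) colors. I expect the main obstacle to be the careful parity bookkeeping in steps 3 and 4, where an apex may share \(c\) and change the count at \(x_1\) or \(x_2\). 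The remedy is always to pass to \(x_3\), whose neighborhood contains no other \(x\)-vertex.
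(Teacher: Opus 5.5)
Your planarity argument, the explicit \(20\)-coloring, Steps~1--2, and the closing argument about \(\varphi(x_1)\) and \(\varphi(x_2)\) are fine and agree with the paper; the closing argument is valid once the \(18\) path vertices are known to be rainbow. The gap is in Steps~3--4, which depend on each other circularly.

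In Step~3 you conclude that at most one of \(a,b,d\) is nonzero because the common apex ``sees \(c\) on both'' paths. That is not a violation by itself: when \(c\notin\varphi(X)\), the apex sees \(c\) exactly \(a+b\), \(a+d\) or \(b+d\) times. With entries in \(\{1,2\}\) this can be \(3\), which is allowed. So Step~3 works only if you already know \(a,b,d\le 1\).

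Step~4 tries to establish exactly that, but it excludes a count of \(2\) by looking at one apex and asserting that this apex sees \(c\) exactly that many times (for instance, that \(x_1\) sees \(c\) exactly \(a=2\) times). For \(c\notin\varphi(X)\), this silently assumes that the other path through that apex carries no \(c\), which is the conclusion of Step~3. Concretely, take \((a,b,d)=(2,1,0)\). Here \(x_1\) sees \(c\) three times, so your reasoning at \(x_1\) in both Step~3 and Step~4 fails. The contradiction actually comes from \(x_2\), which sees \(a+d=2\).

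As a side remark, the ``one exceptional configuration'' in Step~3 does not exist. Whenever both paths through \(x_1\) or \(x_2\) carry \(c\), the other \(x\)-neighbour of that apex is adjacent to a \(c\)-vertex of \(P_{12}\), so it is not colored \(c\).

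The missing idea is to use the parity conditions at all three apexes simultaneously, as the paper does. For \(c\notin\varphi(X)\), the numbers \(a+b\), \(a+d\), \(b+d\) of \(c\)-neighbours of \(x_1,x_2,x_3\) must each be \(0\) or odd.
\begin{itemize}
\item If exactly two of \(a,b,d\) are positive, say \(a,b>0=d\), then \(x_2\) forces \(a\) odd and \(x_3\) forces \(b\) odd. Hence \(x_1\) sees the positive even number \(a+b\), a contradiction.
\item If all three are positive, all three sums are odd, but their total \(2(a+b+d)\) is even, again a contradiction.
\end{itemize}
Hence exactly one of \(a,b,d\) is positive. It is odd by the condition at its two apexes, so it equals \(1\) by Step~2.

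For \(c\in\varphi(X)\), your Step~4 reasoning is in fact sound. Properness forces two of \(a,b,d\) to vanish, and the relevant apex sees exactly the remaining count whenever that count is positive: use \(x_3\) for \(\varphi(x_1)\) and \(\varphi(x_2)\), and \(x_1\) for \(\varphi(x_3)\). With the three-apex parity argument inserted, the rest of your proof goes through.
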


The paper is organized as follows. Section~\ref{sec:preliminaries} gives
the necessary definitions and notation. Section~\ref{sec:strong-odd-368}
proves the auxiliary bound \(\chi_{\mathrm{pfo}}(\mathcal P)\le 92\) and
derives the planar bound \(368\). Section~\ref{sec:planar-strong-odd-20}
presents our explicit planar graph with a strong odd chromatic number
\(20\), together with a complete proof of the exact value. We conclude
with remarks on the remaining gap between the lower bound \(20\) and the
upper bound \(368\), and explain why the previously known universal bound
already resolves the finiteness question of Manattu et al.

\section{Preliminaries}
\label{sec:preliminaries}

All graphs considered in this paper are finite and simple, unless explicitly stated otherwise. For a graph \(G\), we write \(V(G)\) and \(E(G)\) for its vertex set and edge set. For a vertex \(v\in V(G)\), the open neighbourhood of \(v\) is denoted by \(N_G(v)\), and its degree is \(d_G(v)=|N_G(v)|\). When the graph is clear from the context, we simply write \(N(v)\) and \(d(v)\). A proper coloring of \(G\) is a map \(\varphi:V(G)\to C\), where \(C\) is a set of colors, such that adjacent vertices receive distinct colors. The chromatic number of \(G\) is denoted by \(\chi(G)\).

A proper coloring \(\varphi\) of \(G\) is called a \emph{strong odd coloring} if, for every vertex \(v\in V(G)\) and every color \(\alpha\), the set \(N_G(v)\cap \varphi^{-1}(\alpha)\) has either size zero or odd size. Equivalently, every color that appears in the open neighbourhood of \(v\) appears there an odd number of times. The \emph{strong odd chromatic number} of \(G\), denoted by \(\chi_{\mathrm{so}}(G)\), is the minimum number of colors in a strong odd coloring of \(G\).

We shall use the following immediate observation several times. If a vertex \(v\) has two neighbours of the same color \(\alpha\), and no other neighbour of \(v\) has color \(\alpha\), then the coloring is not strong odd. Thus, in a strong odd coloring, no color may appear exactly twice in the neighbourhood of any vertex.

For two vertex-disjoint graphs \(G_1\) and \(G_2\), their join \(G_1\vee G_2\) is obtained from their disjoint union by adding all edges between \(V(G_1)\) and \(V(G_2)\). We write \(\overline{K_2}\) for the empty graph on two vertices. Thus \(P_6\vee\overline{K_2}\) is the graph obtained from a path on six vertices by adding two independent vertices, each adjacent to every vertex of the path.

We also need a facial coloring notion for plane multigraphs. A plane
multigraph is a multigraph together with a fixed embedding in the plane.
We only use loopless \(2\)-connected plane multigraphs. Let \(\mathcal P\)
denote the class of all such multigraphs. If \(H\in\mathcal P\), a proper
vertex coloring of \(H\) is called a \emph{proper facially odd coloring}
if, on the boundary walk of every face \(f\) and for every color
\(\alpha\), the number of occurrences of vertices colored \(\alpha\) is
either zero or odd. Since every facial boundary of a \(2\)-connected
loopless plane multigraph is a cycle or a digon, this is equivalent to
counting the vertices incident with \(f\). The maximum proper facially odd
chromatic number over all graphs in \(\mathcal P\) is denoted by
\(\chi_{\mathrm{pfo}}(\mathcal P)\).

For a positive integer \(k\), a \(k\)-vertex means a vertex of degree exactly
\(k\), a \(k^{-}\)-vertex means a vertex of degree at most \(k\), and a
\(k^{+}\)-vertex means a vertex of degree at least \(k\).

For a vertex \(v\) of a plane multigraph \(H\), let \(N_F(v)\) be the set of
vertices different from \(v\) that appear together with \(v\) on some common
face. The facial degree of \(v\) is \(d_F(v)=|N_F(v)|\). If \(f\) is a face
incident with \(v\), then \(d_F(v,f)\) denotes the number of vertices different
from \(v\) that appear together with \(v\) on a face different from \(f\). For a
face \(f\), let \(w(f)\) denote the number of \(3^{+}\)-vertices incident with
\(f\). A face of weight \(2\) which is not a digon is called a pseudodigon. In
the proof of the upper bound, we use the modified weight \(w'(f)\), defined by
\(w'(f)=3\) if \(f\) is a pseudodigon and \(w'(f)=w(f)\) otherwise.

We shall use the following reduction of Caro et
al.~\cite{CaroPetrusevskiSkrekovskiTuzaStrongOdd}. For every planar graph
\(G\), one has \(\chi_{\mathrm{so}}(G)\le
\chi(G)\chi_{\mathrm{pfo}}(\mathcal P)\). Since every planar graph is
\(4\)-colorable by the Four Color
Theorem~\cite{RobertsonSandersSeymourThomas1996}, any bound
\(\chi_{\mathrm{pfo}}(\mathcal P)\le q\) implies
\(\chi_{\mathrm{so}}(G)\le 4q\) for every planar graph \(G\).

\section{A bound of 368 for strong odd colorings of planar graphs}
\label{sec:strong-odd-368}

By the reduction recalled in Section~\ref{sec:preliminaries} and the Four
Color Theorem, it is enough to prove
\(\chi_{\mathrm{pfo}}(\mathcal P)\le92\). We establish this auxiliary
bound first and then state its consequence for planar graphs.

\subsection{The auxiliary proper facially odd coloring bound}

\begin{theorem}
	\label{thm:pfo-92}
	Every loopless \(2\)-connected plane multigraph admits a proper facially odd coloring with at most \(92\) colors. Equivalently, \(\chi_{\mathrm{pfo}}(\mathcal P)\le 92\).
\end{theorem}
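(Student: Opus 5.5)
We follow the minimal-counterexample and discharging scheme of Kaiser, Rucký, Stehlík and Škrekovski, tightened to $92$ colors. Let $H\in\mathcal P$ be a counterexample with $|V(H)|+|E(H)|$ minimum; it admits no proper facially odd coloring with $92$ colors.

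\textbf{Structural lemmas for $H$.} In each case we delete or contract a small configuration, obtain a smaller member of $\mathcal P$ (after suppressing $2$-vertices or removing parallel edges where $2$-connectivity allows), and color it by minimality. We then extend the coloring to the removed vertices. A vertex $v$ being recolored must avoid two kinds of colors. First, the colors of its facial neighbours $N_F(v)$, which guarantees properness and keeps the parity on every face through $v$ (the new color appears exactly once there). Second, for each incident face $f$, the colors that would become even on $f$. The key count is that a vertex $v$ with $d_F(v)$ small enough, say $d_F(v)\le 91$ counted with suitable multiplicity, always has a free color. The lemmas we aim for are:
\begin{enumerate}[label=(\roman*)]
\item there are no digons except in controlled positions, and no $2$-vertex is incident with a face of small weight;
\item every $3^+$-vertex has large facial degree;
\item there are bounds on how many consecutive $2$-vertices occur on a face, via the quantity $d_F(v,f)$, since $2$-vertices on one face can be recolored along that face;
\item pseudodigons are reducible unless heavy, which is why the modified weight $w'$ is used.
\end{enumerate}

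\textbf{Discharging.} Suppress $2$-vertices to get a plane multigraph $H'$ on the $3^+$-vertices. The faces of $H'$ have weight $w'(f)$. Give each vertex $v$ charge $d(v)-4$ and each face $f$ charge $w'(f)-4$ (or degree$-4$ in $H'$), so that Euler's formula makes the total charge $-8$. The reducibility statements then show every element ends with nonnegative charge. The rules let faces of weight $3$ (including pseudodigons) and digons of $H'$ receive charge from incident $3^+$-vertices of large facial degree. The $92$ threshold enters here: a $3$-vertex whose facial degree is at most $91$ minus its face-parity constraints is reducible. So each surviving low-degree vertex sits on faces of total length about $92$, which forces long faces, and long faces carry enough charge to pay for the small ones. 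The final numbers will be tuned so that every inequality closes with $92$ in place of $97$.

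\textbf{Main obstacle.} The hardest step is the reducibility count for $3$- and $4$-vertices incident with long faces full of $2$-vertices. The naive bound of twice the facial degree plus the parity constraints is what gave $97$. To save $5$ colors, I would first recolor a carefully chosen $2$-vertex on the longest incident face rather than $v$ itself. Such a vertex's parity constraint on that face can be absorbed, because recoloring one $2$-vertex on a face changes only two color counts. This saves roughly the size of one face from the forbidden list. If that proves insufficient, I would fall back to recoloring a pair of vertices simultaneously and count ordered pairs of colors.
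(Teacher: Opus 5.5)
There is a genuine gap. Apart from the single-vertex reduction (a vertex with $d_F(v)\le 91$ is reducible, which is the paper's Claim~1(iv)), your outline defers every step that actually produces $92$ to ``the final numbers will be tuned.'' The one concrete saving you propose also targets a loss that does not exist. Use the right reduction: annihilation, i.e.\ delete $v$ and join consecutive neighbours inside the former angles. Then the only colors forbidden for $v$ are those on $N_F(v)$. A color absent from $N_F(v)$ automatically appears exactly once on every face through $v$. So your ``second kind'' of forbidden colors is empty, and there is no ``twice the facial degree'' slack to recover. Recoloring a $2$-vertex does not help either. Changing a $2$-vertex from $\alpha$ to $\beta$ keeps both its faces facially odd only if, on each of them, $\alpha$ occurred exactly once and $\beta$ did not occur. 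That is more restrictive than coloring $v$ directly. The improvement from $97$ to $92$ must therefore come from the counting and the discharging, which you leave blank.

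Concretely, the paper uses three ingredients your proposal lacks.

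\emph{First}, a counting lemma that turns $d_F(v)\ge 92$ into constraints on the modified weights $l_i$ around $v$: $d_F(v)\le 4\sum l_i-5d-3\sigma$, where $\sigma$ counts consecutive face pairs with $l_i+l_{i+1}\le 24$. The saving of $3$ per such pair holds because the thread between two such faces cannot contain a $2$-vertex; that vertex would have $d_F\le 4\cdot 24-6=90$.

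\emph{Second}, a two-vertex reducible configuration. Your fallback of coloring a pair gestures at this, but the useful form is specific. Suppose $u,v$ share exactly one face $f$ and $d_F(u,f)+d_F(v,f)\le 90$. Annihilate both. Then give them either one common color absent from $N_F(u,f)\cup N_F(v,f)$ that already occurs on $f$ (two new occurrences preserve parity), or two distinct absent colors.

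\emph{Third}, a discharging with charges $d(v)-6$ and $2|f|-6$, and faces split at $w'\le 16$, $17$--$19$, $\ge 20$. It shows that medium and large faces cover the deficits of their special vertices. The crux is as follows. By the pair reduction, every nonexceptional special vertex on $f$ shares a second face with every other special vertex. A planarity argument then puts $k\ge 4$ such vertices on one common face $g\ne f$ with $w'(g)\ge k$, which caps each deficit at $6/k-1/2$. Without something playing this role, a naive count fails. For example, for $w'(f)=17$, up to $17$ incident $3$-vertices may each have deficit $1/6$, exceeding the $5/2$ the face retains.

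There are also smaller errors:
\begin{itemize}
\item Your face charge $w'(f)-4$ breaks Euler's identity. A pseudodigon has length $2$ in $H'$ but $w'(f)=3$, so the total becomes $-8$ plus the number of pseudodigons.
\item In your balanced scheme, $3$-vertices start at $-1$ and cannot act as donors.
\item Your lemma (i) is false as stated. A $2$-vertex may lie on a pseudodigon or on a weight-$3$ face, provided $w(f_0)+w(f_1)\ge 25$.
\end{itemize}
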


\begin{proof}
	Suppose, to the contrary, that \(H\in\mathcal P\) is a counterexample with the minimum number of vertices and, subject to this, the minimum number of edges. We derive a contradiction by adapting the facial-degree and discharging framework used for the known \(97\)-color bound.
	
	\subsubsection{Consequences of minimality}
	
	For a vertex \(v\), let \(\mathcal F(v)\) be the set of faces incident
	with \(v\). Recall that \(N_F(v)\) is the union of the boundary
	vertices of the faces in \(\mathcal F(v)\), with \(v\) itself removed,
	and \(d_F(v)=|N_F(v)|\). If \(f\in\mathcal F(v)\), then
	\(N_F(v,f)\) and \(d_F(v,f)\) are defined in the same way after
	omitting \(f\).
	
	We need the following consequences of minimality. They are the
	\(92\)-color analogues of the reducibility properties in the proof of
	Kaiser et
	al.~\cite{KaiserRuckyStehlikSkrekovski2014}.
	
	\medskip
	\noindent\emph{Claim 1.} The graph \(H\) has the following properties:
	\begin{enumerate}
		\item[(i)] \(|V(H)|>92\);
		\item[(ii)] \(H\) has no parallel edges and hence no digons;
		\item[(iii)] no facial boundary contains four consecutive
		\(2\)-vertices;
		\item[(iv)] \(d_F(v)\ge 92\) for every \(v\in V(H)\);
		\item[(v)] if \(\mathcal F(u)\cap\mathcal F(v)=\{f\}\), then
		\(d_F(u,f)+d_F(v,f)\ge 91\).
	\end{enumerate}
	
	\noindent\emph{Proof of Claim 1.}
	If \(|V(H)|\le 92\), assigning a different color to every vertex is
	a proper facially odd coloring, which proves (i).
	
	For (ii), suppose that two parallel edges form a digon. Deleting one
	of them gives a smaller loopless \(2\)-connected plane multigraph.
	A proper facially odd coloring of the smaller graph is also valid for
	\(H\), since properness makes the two boundary colors of the restored
	digon distinct. If the two parallel edges do not bound a digon, their
	union is a Jordan curve. The subgraphs drawn on its two sides are
	smaller \(2\)-connected plane multigraphs. After permuting color
	names, their \(92\)-colorings agree on the two common endvertices and
	combine into a \(92\)-coloring of \(H\), again a contradiction.
	
	To prove (iii), suppose that \(x_1x_2x_3x_4\) occurs consecutively on
	a facial boundary and all four vertices have degree \(2\). Let \(y_1\)
	and \(y_4\) be the other neighbours of \(x_1\) and \(x_4\),
	respectively. The vertices \(y_1,y_4\) are distinct from each other and
	from all four \(x_i\): otherwise that facial boundary would close into
	a cycle on four or five vertices whose only possible attachment to the
	rest of \(H\) is one vertex. By \(2\)-connectivity there is no such
	attachment, contrary to (i). Contract the path
	\(x_1x_2x_3x_4y_4\) to one vertex. No loop is created, because the
	\(x_i\) have degree \(2\) and \(y_1\ne y_4\). To see that the quotient
	is \(2\)-connected, first delete its contracted vertex. The graph that
	remains is \(H-\{x_1,x_2,x_3,x_4,y_4\}\), which is connected because
	\(H-y_4\) is connected and \(x_4x_3x_2x_1\) is a pendant path there,
	attached only at \(y_1\). If instead any other vertex \(r\) is deleted,
	then \(H-r\) is connected and contracting the still-connected path
	\(x_1x_2x_3x_4y_4\) preserves connectedness. Thus the quotient has no
	cutvertex; it also has more than two vertices by (i). In a
	\(92\)-coloring of the contracted plane multigraph,
	the colors on \(y_1\) and on the contracted vertex are distinct.
	Alternating these two colors along
	\(y_1x_1x_2x_3x_4y_4\) restores a proper coloring and adds each color
	an even number of times to the affected facial boundaries. Thus all
	facial parities are preserved, a contradiction.
	
	We next recall the annihilation operation. Let \(z\) have cyclically
	ordered, pairwise distinct neighbors \(z_0,\ldots,z_{d-1}\). Delete
	\(z\), and for every \(i\) add the edge \(z_i z_{i+1}\) inside the
	angle formerly bounded by \(zz_i\) and \(zz_{i+1}\), with indices
	taken modulo \(d\). When the original graph has at least four
	vertices and no parallel edge at \(z\), the resulting plane
	multigraph is loopless and \(2\)-connected: every old facial boundary
	remains a cycle after replacing the two-edge segment through \(z\) by
	the new edge, and the additional boundary
	\(z_0z_1\cdots z_{d-1}z_0\) is a cycle, or a digon when \(d=2\).
	Thus every facial boundary of the resulting graph is a cycle or digon,
	which is the facial-boundary criterion for \(2\)-connectivity.
	
	If \(d_F(z)\le 91\), annihilate \(z\) and color the smaller graph.
	At least one of the \(92\) colors is absent from \(N_F(z)\). Giving
	\(z\) such a color preserves properness and adds that color exactly
	once to every facial boundary incident with \(z\). This contradicts
	the choice of \(H\) and proves (iv).
	
	Finally, suppose that
	\(\mathcal F(u)\cap\mathcal F(v)=\{f\}\) and
	\(d_F(u,f)+d_F(v,f)\le 90\). The vertices \(u\) and \(v\) are
	nonadjacent, since every edge of a \(2\)-connected plane graph is
	incident with two faces. Annihilate \(u\) and then \(v\). Since they
	are nonadjacent, the first annihilation creates no edge incident with
	\(v\), so the second annihilation again satisfies the preceding
	hypotheses. Take a \(92\)-coloring of the resulting graph and put
	\(A=N_F(u,f)\cup N_F(v,f)\). At least two colors are absent from
	\(A\). If one such color occurs on a boundary vertex of \(f\) outside
	\(A\), give that color to both \(u\) and \(v\); its parity on \(f\)
	is unchanged because two occurrences are restored. Otherwise, give
	\(u\) and \(v\) two distinct colors absent from \(A\); these colors
	are then absent from the remainder of \(f\) and each is restored
	once. In both cases, all other affected faces gain one occurrence of
	a previously absent color. The extension is proper and facially odd,
	a contradiction. This proves (v) and completes Claim 1.
	
	\subsubsection{Modified face weights}
	
	For a face \(f\), let \(w(f)\) be the number of \(3^+\)-vertices on
	its boundary. A face of weight \(2\) that is not a digon is called a
	pseudodigon. Define
	\[
	w'(f)=
	\begin{cases}
		3,&\text{if \(f\) is a pseudodigon},\\
		w(f),&\text{otherwise}.
	\end{cases}
	\]
	The graph \(H\) is not a cycle: by Claim~1(i) such a cycle would have
	more than \(92\) vertices, contradicting Claim~1(iii). Moreover, in a
	\(2\)-connected graph that is not a cycle, every facial boundary has
	at least two \(3^+\)-vertices. Consequently, \(w'(f)\ge 3\) for every
	face \(f\).
	
	The next claim supplies all facial-degree estimates used below. It is
	a refinement, with the cutoff \(24\), of the counting lemma of Kaiser
	et al.~\cite{KaiserRuckyStehlikSkrekovski2014}.
	
	\medskip
	\noindent\emph{Claim 2.} Let \(v\) be a vertex of degree \(d\), let
	\(f_0,\ldots,f_{d-1}\) be its incident faces in cyclic order, and put
	\(l_i=w'(f_i)\).
	\begin{enumerate}
		\item[(a)] If \(d\ge 3\), then
		\begin{equation}
			d_F(v)\le
			4\sum_{i=0}^{d-1}l_i-5d-3\sigma,
			\label{eq:facial-degree}
		\end{equation}
		where
		\(\sigma=|\{i:l_i+l_{i+1}\le 24\}|\), with indices modulo \(d\).
		\item[(b)] If \(d=2\), then
		\begin{equation}
			d_F(v)\le 4\bigl(w(f_0)+w(f_1)\bigr)-6.
			\label{eq:degree-two}
		\end{equation}
		\item[(c)] If \(d\ge 3\), then
		\begin{equation}
			d_F(v,f_0)\le
			4\sum_{i=1}^{d-1}l_i-5d+9.
			\label{eq:reduced-facial-degree}
		\end{equation}
		If, in addition, \(d=3\) and \(l_1+l_2\le 24\), then
		\begin{equation}
			d_F(v,f_0)\le 4(l_1+l_2)-9.
			\label{eq:sharp-reduced-facial-degree}
		\end{equation}
	\end{enumerate}
	
	\noindent\emph{Proof of Claim 2.}
	A \emph{thread} is a maximal path whose internal vertices have degree
	\(2\) and whose endvertices have degree at least \(3\). Such threads
	cover all edges of \(H\), because \(H\) is not a cycle. By
	Claim~1(iii), every thread has at most three internal vertices.
	
	We first prove part (b). Let a \(2\)-vertex \(u\) lie on a thread
	\(P\) with high-degree endvertices, and let its two incident faces have
	weights \(a=w(f_0)\) and \(b=w(f_1)\). After all threads are suppressed
	to edges, the two facial boundaries have \(a\) and \(b\) high-degree
	vertices, respectively. Both endvertices of \(P\) occur on both
	boundaries, so their union has at most \(a+b-2\) high-degree vertices.
	After the shared thread \(P\) is omitted, the two boundaries likewise
	use at most \(a+b-2\) other threads.
	Thus \(N_F(u)\) contains at most \(a+b-2\) high-degree vertices, at
	most \(3(a+b-2)\) internal vertices of those other threads, and at
	most two further internal vertices of \(P\). Hence,
	\[
	d_F(u)\le(a+b-2)+3(a+b-2)+2=4(a+b)-6,
	\]
	which is \eqref{eq:degree-two}.
	
	Now let \(d\ge3\). Denote the threads incident with \(v\), in cyclic
	order, by \(P_0,\ldots,P_{d-1}\), so that \(f_i\) lies between
	\(P_i\) and \(P_{i+1}\). Traverse the boundary of \(f_i\) from \(v\)
	along \(P_i\), and let \(Q_i\) be the boundary segment that starts at
	the first vertex after \(v\) and stops immediately before the last
	high-degree vertex preceding the return along \(P_{i+1}\). If \(f_i\)
	is a pseudodigon, enlarge \(Q_i\) by that last high-degree vertex,
	which is its unique high-degree boundary vertex other than \(v\).
	
	If \(f_i\) is not a pseudodigon, \(Q_i\) contains at most
	\(w(f_i)-2\) high-degree vertices and the internal vertices of at most
	\(w(f_i)-1\) threads. If \(f_i\) is a pseudodigon, the enlarged
	\(Q_i\) contains at most its other high-degree vertex and three
	\(2\)-vertices. Consequently, in both cases,
	\(|V(Q_i)|\le4l_i-5\).
	
	These enlarged segments cover \(N_F(v)\): the part omitted at the end
	of \(Q_i\) lies on \(P_{i+1}\) and is covered from its other direction
	by \(Q_{i+1}\); in the pseudodigon case the shared high-degree
	endvertex was included explicitly.
	
	If \(P_i\) is a single edge, then the first of the thread intervals
	allowed in the preceding count has no internal vertex. The bound for
	\(Q_i\) therefore improves to
	\(
	|V(Q_i)|\le4l_i-8.
	\)
	(The same inequality holds automatically for a pseudodigon, where the
	enlarged segment has at most four vertices.) Suppose now that
	\(l_i+l_{i+1}\le24\). If \(P_{i+1}\) had an internal \(2\)-vertex
	\(u\), its incident faces would be \(f_i,f_{i+1}\), and part (b) would
	give
	\[
	d_F(u)\le4\bigl(w(f_i)+w(f_{i+1})\bigr)-6
	\le4\cdot24-6=90,
	\]
	contrary to Claim~1(iv). Thus \(P_{i+1}\) is a single edge, and the
	bound for the distinct segment \(Q_{i+1}\) saves \(3\). Summing over
	all \(i\) proves \eqref{eq:facial-degree}.
	
	Finally, \(Q_1,\ldots,Q_{d-1}\) cover \(N_F(v,f_0)\) except possibly
	for the other endvertex of one incident thread, and its at most three
	internal \(2\)-vertices. Adding these four possible vertices to the
	basic bounds give
	\[
	\sum_{i=1}^{d-1}(4l_i-5)+4
	=4\sum_{i=1}^{d-1}l_i-5d+9,
	\]
	which proves \eqref{eq:reduced-facial-degree}. When \(d=3\) and
	\(l_1+l_2\le24\), the common thread \(P_2\) is a single edge, so
	\(Q_2\) saves \(3\); the last display becomes
	\(4(l_1+l_2)-9\), proving
	\eqref{eq:sharp-reduced-facial-degree}. This completes Claim 2.
	
	We also record two elementary plane-separation facts used in the
	final discharging cases.
	
	\medskip
	\noindent\emph{Claim 3.}
	The following two elementary facts about intersections of facial boundaries
	will be used in the final part of the discharging argument.
	\begin{enumerate}
		\item[(a)] If \(f\) and \(g\) are distinct faces and \(w(g)\le 3\), then
		the intersection of their boundary cycles is a union of disjoint paths, at
		most one of which is nontrivial. Every internal vertex of these paths has
		degree \(2\). In particular, at most two degree-\(3\) vertices are incident
		with both \(f\) and \(g\), and if there are two, they are the endvertices of
		the nontrivial common path.
		
		\item[(b)] Let \(S\) be a set of at least four vertices on the boundary of a
		face \(f\). If every two vertices of \(S\) are incident with a common face
		different from \(f\), then all vertices of \(S\) are incident with one
		common face different from \(f\).
	\end{enumerate}
	
	\noindent\emph{Proof of Claim 3.}
	For part (a), let \(J\) be the intersection of the two boundary
	cycles. Its components are isolated vertices and paths. It has no
	cycle component: such a component would be both boundary cycles and
	would force \(H\) itself to be a cycle. At an internal vertex of a
	common path, the two common boundary edges occupy both facial sides;
	an additional edge would split one of the two faces there. Hence every
	such internal vertex has degree \(2\). Conversely, an endvertex of a
	maximal nontrivial common path has degree at least \(3\), since at a
	\(2\)-vertex the same two boundary edges would continue the path.
	Every nontrivial component of \(J\) therefore, uses two of the at most
	three high-degree boundary vertices of \(g\), so there is at most one.
	Moreover, at a
	common vertex of degree \(3\), the two facial sectors corresponding to
	\(f\) and \(g\) share an incident edge in the cyclic rotation. Such a
	vertex is therefore an endvertex of a nontrivial common boundary path.
	Since there is at most one such path, there are at most two common
	degree-\(3\) vertices, and if there are two, they are its endvertices.
	
	For (b), choose \(u,x,v\in S\) as three consecutive members of \(S\)
	in the cyclic order along the boundary of \(f\), and let
	\(y\in S\setminus\{u,x,v\}\). Draw a simple arc from
	\(u\) to \(v\) through the interior of a face \(g\ne f\) incident
	with both, and a second simple arc from \(v\) to \(u\) through the
	interior of \(f\). The arcs may be chosen so that their union is a
	Jordan curve. By the cyclic order \(u,x,v,y\) on the boundary of
	\(f\), the vertices \(x\) and \(y\) lie on opposite sides of this
	curve. An arc joining \(x\) to \(y\) through a common face different
	from \(f\) must therefore meet the first arc. Since the interiors of
	distinct faces are disjoint, that common face is \(g\). Varying
	\(y\) shows that every vertex of \(S\) is incident with \(g\),
	proving the claim.
	
	\subsubsection{Discharging rules}
	
	For a face \(f\), let \(|f|\) denote the number of vertices on its
	boundary cycle. Assign initial charge
	\(\operatorname{ch}_0(v)=d(v)-6\) to each vertex and
	\(\operatorname{ch}_0(f)=2|f|-6\) to each face. Euler's formula gives
	\begin{equation}
		\sum_{v\in V(H)}\operatorname{ch}_0(v)
		+\sum_{f\in F(H)}\operatorname{ch}_0(f)=-12.
		\label{eq:total-charge}
	\end{equation}
	
	We use two initial redistribution rules.
	\begin{enumerate}
		\item[(D1)] Every non-pseudodigon sends \(2\) units to each
		incident \(2\)-vertex. A pseudodigon sends \(2\) units to all but
		one of its incident \(2\)-vertices.
		\item[(D2)] A face is \emph{small} if \(w'(f)\le16\),
		\emph{medium} if \(17\le w'(f)\le19\), and \emph{large} if
		\(w'(f)\ge20\). A small face distributes all its remaining
		charge equally among its incident \(3^+\)-vertices. A medium
		face retains \(w'(f)/2-6\) and distributes the rest equally. A
		large face retains \(4\) and distributes the rest equally.
	\end{enumerate}
	
	After (D1), a face \(f\) has charge \(2w'(f)-6\). Therefore a
	\(3^+\)-vertex receives \(2-6/w'(f)\) from an incident small face,
	exactly \(3/2\) from an incident medium face, and
	\(2-10/w'(f)\ge3/2\) from an incident large face. A vertex is
	\emph{special} if its charge is negative after (D1)--(D2), and its
	\emph{deficit} is the absolute value of this negative charge.
	
	\subsubsection{Special vertices are incident with non-small faces}
	
	We first prove that every special vertex is incident with a non-small
	face. Suppose that a \(3^+\)-vertex \(v\) is incident only with small
	faces. No vertex of degree at least \(6\) is special, since both its
	initial charge and all received charges are nonnegative. Thus
	\(d=d(v)\in\{3,4,5\}\). If \(l_1,\ldots,l_d\le16\) are the modified
	weights around \(v\), negativity is equivalent to
	\begin{equation}
		\sum_{i=1}^{d}\frac1{l_i}>\frac d2-1.
		\label{eq:small-reciprocal}
	\end{equation}
	
	Order the \(l_i\)'s increasingly. The short case analysis behind the
	facial-degree maxima is as follows. For \(d=3\), if \(l_1=3\), then
	\(1/l_2+1/l_3>1/6\). When \(l_2\le9\) this gives
	\(l_1+l_2+l_3\le28\), while for \(l_2\ge10\) it gives
	\(l_2+l_3\le24\); in either case the two pairs containing \(l_1\)
	have sum at most \(24\). If \(l_1\ge4\), then \(l_1\le5\), and the
	same reciprocal inequality gives total sum at most \(25\), with all
	three cyclic pair sums at most \(24\). For \(d=4\), three entries
	equal to \(3\) allow the fourth to be at most \(16\). With exactly two
	entries equal to \(3\), the other entries are at most \((4,11)\) or
	\((5,7)\), so their total sum is at most \(21\); with fewer entries
	equal to \(3\), the total is smaller. Finally, for \(d=5\), at most one
	entry can exceed \(3\), and that entry is at most \(5\). Substitution
	in \eqref{eq:facial-degree} gives
	\[
	\begin{array}{c|c|c}
		d & \text{extremal tuple} & \text{bound from
			\eqref{eq:facial-degree}}\\ \hline
		3 & (3,9,16) & 91\\
		4 & (3,3,3,16) & 68\\
		5 & (3,3,3,3,5) & 28
	\end{array}
	\]
	In the first row, at least two cyclic pairs have a sum at most \(24\);
	in the last two rows every cyclic pair does. Thus the displayed
	bounds follow from \eqref{eq:facial-degree}, and each contradicts
	Claim~1(iv).
	
	If \(v\) is a special \(2\)-vertex, one of its incident faces must
	be a pseudodigon whose exceptional \(2\)-vertex in (D1) is \(v\).
	If its other incident face were small, then
	\eqref{eq:degree-two} would give
	\(d_F(v)\le4(2+16)-6=66<92\). Hence, every special vertex is
	incident with a non-small face.
	
	\subsubsection{Reduced configurations of special vertices}
	
	Every non-small face gives at least \(3/2\) to each incident
	\(3^+\)-vertex. Consequently, a special \(3^+\)-vertex has exactly
	one incident non-small face and has a degree at most \(4\). Let \(f\)
	be this unique face.
	
	For a degree-\(3\) vertex, write its two remaining modified weights
	as \(a\le b\). Using \(3/2\) as a lower bound for the contribution
	from \(f\), negativity implies
	\(1/a+1/b>5/12\). Hence either \(a=3\) and \(b\le11\), or
	\(a=4\) and \(b\le5\). For a degree-\(4\) vertex, negativity implies
	that the sum of the reciprocals of the three remaining weights is
	greater than \(11/12\), which forces all three weights to be \(3\).
	A special \(2\)-vertex has the actual \(f\)-reduced configuration
	\((2)\), corresponding to its pseudodigon. Thus the possibilities
	and their charges are
	\[
	\begin{array}{c|c|c}
		d(v) & \text{reduced configuration} & \text{charge at least}\\ \hline
		2 & (2) & -2\\
		3 & (3,x),\ x\le11 & 1/2-6/x\\
		3 & (4,x),\ x\le5 & 1-6/x\\
		4 & (3,3,3) & -1/2
	\end{array}
	\]
	The first row uses the actual weight \(2\); the other rows use
	modified weights.
	
	Equations \eqref{eq:reduced-facial-degree} and
	\eqref{eq:sharp-reduced-facial-degree} give
	\[
	\begin{array}{c|c}
		\text{reduced configuration} & d_F(v,f)\text{ is at most}\\ \hline
		(3,11) & 47\\
		(3,x),\ x\le10 & 43\\
		(4,x),\ x\le5 & 27\\
		(3,3,3) & 25\\
		(2) & 7
	\end{array}
	\]
	For the last row, the other face is a pseudodigon. Its two
	\(3^+\)-vertices are separated by at most three \(2\)-vertices on
	each boundary arc, so it has at most eight boundary vertices and
	\(d_F(v,f)\le7\).
	
	Call a special vertex \emph{exceptional on \(f\)} if its reduced
	configuration is \((3,11)\). If two special vertices on \(f\) have
	no other common incident face, both must be exceptional. Indeed, if
	at least one is nonexceptional; their reduced facial degrees sum to
	at most \(43+47=90\), contrary to Claim~1(v).
	
	\subsubsection{Medium faces}
	
	Let \(f\) be a medium face and put
	\(t=w'(f)\in\{17,18,19\}\). A special \(2\)-vertex cannot be incident
	with \(f\), because \eqref{eq:degree-two} would give
	\(d_F(v)\le4(t+2)-6\le78\). A reduced configuration \((4,x)\) gives
	the largest full facial-degree bound at \((4,5,19)\), namely \(88\);
	a reduced configuration \((3,3,3)\) gives a bound at most \(80\).
	Both contradict Claim~1(iv). Hence, every special vertex on \(f\)
	has reduced configuration \((3,x)\).
	
	Substituting the full configuration \((3,x,t)\) into
	\eqref{eq:facial-degree}, Claim~1(iv) forces the following lower
	bounds. Since a medium face contributes exactly \(3/2\), the deficit
	of a nonexceptional vertex is at most \(6/x-1/2\).
	\[
	\begin{array}{c|c|c|c}
		t & x\text{ is at least} & \text{largest deficit}
		& \text{charge retained by }f\\ \hline
		17 & 9 & 1/6 & 5/2\\
		18 & 8 & 1/4 & 3\\
		19 & 7 & 5/14 & 7/2
	\end{array}
	\]
	For instance, when \(t=17\), the value \(x=8\) gives
	\(d_F(v)\le91\); the other two rows follow identically. An
	exceptional vertex has deficit
	\(6/11-1/2=1/22\).
	
	Let \(C\) be the set of nonexceptional special vertices on \(f\) and
	put \(k=|C|\). Every vertex in \(C\) shares a second face with every
	special vertex on \(f\), by the last paragraph of the preceding
	subsection. If \(k\ge4\), Claim~3(b) gives a face \(g\ne f\)
	incident with every vertex of \(C\). Thus, the second entry \(x\) of
	each reduced configuration in \(C\) is at least \(k\).
	
	Let \(b_{17}=9\), \(b_{18}=8\), and \(b_{19}=7\). There are at most
	\(t-k\) exceptional vertices, so the total deficit is at most
	\[
	k\left(\frac{6}{\max\{b_t,k\}}-\frac12\right)
	+\frac{t-k}{22}
	\qquad (k\ge4).
	\]
	The same formula with denominator \(b_t\) applies for \(k\le3\).
	For \(0\le k\le b_t\) this upper bound is increasing in \(k\). For
	\(k\ge b_t\) it equals
	\(6-k/2+(t-k)/22\), which is decreasing. Its maximum therefore, occurs
	at \(k=b_t\), giving
	\[
	\begin{array}{c|c|c}
		t & \text{largest total deficit} & \text{retained charge}\\ \hline
		17 & 41/22 & 5/2\\
		18 & 27/11 & 3\\
		19 & 67/22 & 7/2
	\end{array}
	\]
	The maxima occur at \(k=9,8,7\), respectively. In every row, the
	retained charge is larger than the total deficit. Thus, every medium
	face can pay the deficits of all its incident special vertices.
	
	\subsubsection{Large faces of modified weight 20 or 21}
	
	Let \(t=w'(f)\in\{20,21\}\). A special \(2\)-vertex is impossible,
	since \eqref{eq:degree-two} gives
	\(d_F(v)\le4(t+2)-6\le86\). A reduced configuration
	\((3,3,3)\) is also excluded by
	\eqref{eq:facial-degree}. For a reduced configuration \((3,x)\),
	Claim~1(iv) forces \(x\ge6\) when \(t=20\), and \(x\ge5\) when
	\(t=21\). The remaining reduced configurations of type \((4,x)\)
	are \((4,5)\) for \(t=20\), and \((4,4)\) or \((4,5)\) for
	\(t=21\); their deficits are no larger than the maximum deficit of
	a nonexceptional \((3,x)\)-vertex.
	
	For a large face of weight \(t\), the deficits of the three
	high-degree types are, respectively,
	\[
	\frac{10}{t}+\frac6x-1,\qquad
	\frac{10}{t}+\frac6x-\frac32,\qquad
	\frac{10}{t}.
	\]
	An exceptional vertex therefore has deficit \(1/22\) for \(t=20\)
	and \(5/231\) for \(t=21\).
	
	Let \(C\) again be the nonexceptional special vertices and
	\(k=|C|\). Every vertex of \(C\) shares another face with every
	special vertex. If \(k\ge4\), Claim~3(b) shows that the largest
	reduced weight of each vertex in \(C\) is at least \(k\). The
	\((4,x)\)-deficit is dominated by the \((3,x)\)-deficit. Put
	\(b_{20}=6\), \(b_{21}=5\), \(e_{20}=1/22\), and \(e_{21}=5/231\).
	For every integer \(0\le k\le t\), the total deficit is at most
	\[
	k\left(\frac{10}{t}
	+\frac6{\max\{b_t,k\}}-1\right)+(t-k)e_t.
	\]
	This expression increases up to \(k=b_t\) and decreases thereafter,
	so its maximum occurs at \(k=b_t\). It yields
	\[
	\begin{array}{c|c|c}
		t & \text{largest total deficit} & \text{retained charge}\\ \hline
		20 & 40/11 & 4\\
		21 & 41/11 & 4
	\end{array}
	\]
	The maxima occur at \(k=6\) and \(k=5\), respectively. Both are
	strictly smaller than \(4\), so every face of modified weight \(20\)
	or \(21\) can pay all incident deficits.
	
	\subsubsection{Large faces of modified weight at least 22}
	
	Finally, let \(f\) be a large face and put \(t=w'(f)\ge22\).
	An exceptional vertex has a charge of at least
	\[
	-3+\left(2-\frac{10}{t}\right)
	+\left(2-\frac6{11}\right)
	=1-\frac{10}{t}-\frac6{11}\ge0.
	\]
	Thus no exceptional vertex is special on \(f\), and every two
	special vertices on \(f\) share another incident face.
	
	Let \(S\) be the set of special vertices on \(f\). We first handle
	\(|S|\le3\), where a numerical estimate alone is not sufficient. A
	high-degree special vertex has deficit at most
	\(1+10/t\le16/11\); equality in this bound can occur only for a
	degree-\(3\) vertex with reduced configuration \((3,3)\). Every
	other high-degree types have a deficit at most
	\(1/2+10/t\le21/22\), except that a degree-\(4\) type has the still
	smaller deficit \(10/t\). There is at most one special
	\(2\)-vertex on \(f\): any two special vertices share a second face,
	whereas a pseudodigon leaves only one incident \(2\)-vertex unpaid
	in (D1).
	
	If \(|S|\le2\), the total deficit is less than \(4\), even when one
	member is a \(2\)-vertex. Suppose \(|S|=3\) and the total deficit
	exceeds \(4\). If \(S\) contains no \(2\)-vertex, all three vertices
	must have reduced configuration \((3,3)\); otherwise the total is at
	most \(2(16/11)+21/22<4\). By Claim~1(v), every pair shares a face
	different from \(f\). Claim~3(a) implies that these three secondary
	faces are distinct. For each pair, its nontrivial common path is the
	arc of the boundary of \(f\) that avoids the third special vertex,
	because every internal vertex of such a path has degree \(2\). The
	three resulting arcs partition the boundary of \(f\), and all their
	internal vertices have degree \(2\). Hence \(w(f)=3\), contrary to
	\(t\ge22\).
	
	Now suppose that \(S\) contains a special \(2\)-vertex \(z\). Its
	other incident face \(g\) is a pseudodigon. Every other member of
	\(S\) must also be incident with \(g\), and hence must be one of the
	two \(3^+\)-vertices on \(g\). If the total deficit exceeds \(4\),
	both of these vertices have degree \(3\); a degree-\(4\) vertex
	would contribute at most \(10/t\), making the total smaller than
	\(4\). All internal vertices of either boundary arc of the pseudodigon
	\(g\) have degree \(2\), so each arc has a single face on its other
	side. One of these faces is \(f\); call the other one \(h\). The two
	degree-\(3\) endvertices are therefore incident with the same three
	faces \(f,g,h\), and hence have
	the same reduced configuration. The total can exceed \(4\) only if
	this configuration is \((3,3)\). Let the two vertices be \(p\) and
	\(q\). The maximal common \(f\)--\(g\) boundary path through \(z\)
	is a \(p\)--\(q\) path, since all other vertices of the pseudodigon
	have degree \(2\). By Claim~3(a), the common \(f\)--\(h\) boundary
	path is the complementary \(p\)--\(q\) arc of the boundary of \(f\),
	and all its internal vertices also have degree \(2\). Thus \(p,q\)
	are the only \(3^+\)-vertices on \(f\), so \(w(f)=2\), again a
	contradiction.
	
	It remains to consider \(k=|S|\ge4\). There is then no special
	\(2\)-vertex: if \(z\) were one, pairwise secondary cofaciality would
	force every other member of \(S\) to lie on the pseudodigon incident
	with \(z\), which has only two high-degree boundary vertices, giving
	\(|S|\le3\). By Claim~3(b), all vertices of \(S\) are incident with
	one common face \(g\ne f\), whose modified weight is at least \(k\).
	Since every entry in every remaining reduced configuration is at most
	\(11\), these observations also imply \(k\le11\).
	For a reduced configuration \((3,x)\), the deficit is at most
	\[
	\frac{10}{t}+\frac6k-1
	<\frac6k-\frac12.
	\]
	The other reduced configurations have smaller deficits, and
	\((3,3,3)\) cannot contain a common face of weight at least \(4\).
	Consequently, the total deficit is less than
	\(k(6/k-1/2)=6-k/2\le4\). Therefore, every large face of modified
	weight at least \(22\) can pay all incident deficits.
	
	\subsubsection{Completion of the discharging argument}
	
	Apply one final rule:
	\begin{enumerate}
		\item[(D3)] Every non-small face gives each incident special
		vertex an amount equal to that vertex's deficit.
	\end{enumerate}
	Every special vertex has a unique incident non-small face, so it is
	paid exactly once. The preceding three subsections show that each
	medium or large face retained enough charge to make all these
	payments. Small faces finish with charge \(0\), and all vertices and
	all other faces finish with nonnegative charge. This contradicts
	\eqref{eq:total-charge}. Therefore, no counterexample exists, and
	\(\chi_{\mathrm{pfo}}(\mathcal P)\le92\).
\end{proof}

\subsection{Consequences for planar graphs}

\begin{theorem}
	\label{thm:strong-odd-planar-368}
	Every planar graph \(G\) satisfies \(\chi_{\mathrm{so}}(G)\le 368\).
\end{theorem}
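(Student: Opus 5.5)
The statement follows by combining Theorem~\ref{thm:pfo-92} with the reduction of Caro et al.\ recalled in Section~\ref{sec:preliminaries}. That reduction states that \(\chi_{\mathrm{so}}(G)\le\chi(G)\,\chi_{\mathrm{pfo}}(\mathcal P)\) for every planar graph \(G\).

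The argument has three steps. First, fix an arbitrary planar graph \(G\). Second, invoke the Four Color Theorem~\cite{RobertsonSandersSeymourThomas1996} to obtain \(\chi(G)\le 4\). Third, apply Theorem~\ref{thm:pfo-92}, which gives \(\chi_{\mathrm{pfo}}(\mathcal P)\le 92\). Substituting both bounds into the reduction yields
\[
\chi_{\mathrm{so}}(G)\le 4\cdot 92=368.
\]

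It is also worth recalling why the reduction uses the product form, so that the constant is clearly justified. One takes a proper \(4\)-coloring \(c_1\) of \(G\) and a suitable loopless \(2\)-connected plane multigraph built from \(G\), on which facial oddness encodes the neighbourhood parity condition. One then takes a proper facially odd coloring \(c_2\) of that multigraph and assigns each vertex the pair \((c_1,c_2)\). This uses at most \(4\cdot 92\) colors.

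There is no real obstacle here. The only point needing care is that the reduction of Caro et al.\ is stated for the whole class \(\mathcal P\), so it is exactly Theorem~\ref{thm:pfo-92}, a bound valid for every member of \(\mathcal P\), that is required, rather than a bound for simple graphs only. Since the reduction is taken as given from the cited work and Theorem~\ref{thm:pfo-92} has been established, the proof is a two-line substitution.
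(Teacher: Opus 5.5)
Your proposal is correct and is exactly the paper's proof: the Four Color Theorem gives \(\chi(G)\le 4\), Theorem~\ref{thm:pfo-92} gives \(\chi_{\mathrm{pfo}}(\mathcal P)\le 92\), and substituting both into the reduction \(\chi_{\mathrm{so}}(G)\le\chi(G)\chi_{\mathrm{pfo}}(\mathcal P)\) of Caro et al.\ yields \(368\). Your aside on why the reduction has product form is only a rough heuristic, since the parity must be controlled separately within each color class of the \(4\)-coloring; but, as in the paper, the reduction is used as a black box, so nothing in the argument depends on that sketch.
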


\begin{proof}
	By the Four Color Theorem, every planar graph \(G\) satisfies \(\chi(G)\le 4\). By the reduction of Caro et al., we have \(\chi_{\mathrm{so}}(G)\le \chi(G)\chi_{\mathrm{pfo}}(\mathcal P)\). By Theorem~\ref{thm:pfo-92}, this gives
	\(
	\chi_{\mathrm{so}}(G)\le 4\cdot 92=368.
	\)
	This completes the proof.
\end{proof}

\begin{corollary}
	There exists a constant \(c<388\) such that every planar graph \(G\) satisfies \(\chi_{\mathrm{so}}(G)<c\). In particular, one may take \(c=369\).
\end{corollary}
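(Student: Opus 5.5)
This corollary follows at once from Theorem~\ref{thm:strong-odd-planar-368}, so the plan is short. Take \(c=369\). We have \(369<388\), so this constant satisfies the requirement \(c<388\). By Theorem~\ref{thm:strong-odd-planar-368}, every planar graph \(G\) satisfies \(\chi_{\mathrm{so}}(G)\le 368\). Since \(\chi_{\mathrm{so}}(G)\) is an integer, this is the same as \(\chi_{\mathrm{so}}(G)<369=c\), and that is the strict inequality the corollary asks for.

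In order, the steps are as follows. First, invoke Theorem~\ref{thm:strong-odd-planar-368}. That theorem rests on Theorem~\ref{thm:pfo-92}, the reduction \(\chi_{\mathrm{so}}(G)\le\chi(G)\chi_{\mathrm{pfo}}(\mathcal P)\) of Caro et al., and the Four Color Theorem. Second, convert the non-strict bound \(\le 368\) into the strict bound \(<369\). Third, check the numerical comparison \(369<388\). Any integer \(c\) with \(369\le c<388\) would also work; \(369\) is simply the smallest constant that Theorem~\ref{thm:strong-odd-planar-368} certifies.

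No step here is a genuine obstacle, because all of the difficulty sits in Theorem~\ref{thm:pfo-92}, which we assume. The one point that needs care is the conversion from \(\le 368\) to \(<369\), and it holds because the chromatic number is an integer.
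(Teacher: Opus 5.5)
Your proposal is correct and matches the paper, which obtains the corollary directly from Theorem~\ref{thm:strong-odd-planar-368} using \(368<369<388\). As a minor remark, the direction \(\chi_{\mathrm{so}}(G)\le 368\Rightarrow\chi_{\mathrm{so}}(G)<369\) does not need integrality. Also, \(369\) is only the smallest \emph{integer} choice, since any real \(c\in(368,388)\) works.
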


%%%%%%%%%%%%%%%%%%%%%%%%%%%%

\section{A planar graph with strong odd chromatic number 20}
\label{sec:planar-strong-odd-20}

Manattu et al.~\cite{ManattuVinayLakshmanan2026}
constructed planar graphs with strong odd chromatic number up to \(17\)
and asked for planar graphs with a larger strong odd chromatic number.
Chronologically, the existence of a planar graph with a strong odd
chromatic number \(20\) was already stated in the arXiv preprint of
Goetze et al.~\cite{GoetzeKluteKnauerParadaPenaUeckerdt2025}, posted in
May 2025. Their preprint exhibits such a graph, but does not provide a
detailed verification of the exact value. In this section, we give a
different explicit planar graph
\(G\) and provide a complete proof that \(\chi_{\mathrm{so}}(G)=20\).
Thus, this section should be read as a self-contained, fully verified
construction in the direction of Problem~1 of Manattu et al., not as a
priority claim for the value \(20\).

\subsection{Construction of the graph}

Let \(X=\{x_1,x_2,x_3\}\). Let
\(P_{12}=v_{12}^{1}v_{12}^{2}\cdots v_{12}^{6}\),
\(P_{13}=v_{13}^{1}v_{13}^{2}\cdots v_{13}^{6}\), and
\(P_{23}=v_{23}^{1}v_{23}^{2}\cdots v_{23}^{6}\) be three pairwise
vertex-disjoint paths on six vertices. Assume that these three paths are
also disjoint from \(X\).

The graph \(G\) has vertex set
\(
V(G)=X\cup V(P_{12})\cup V(P_{13})\cup V(P_{23}).
\)
Its edge set is defined as follows. First, keep all edges of the three
paths \(P_{12},P_{13},P_{23}\). Next, join \(x_1\) and \(x_2\) to every
vertex of \(P_{12}\), join \(x_1\) and \(x_3\) to every vertex of
\(P_{13}\), and join \(x_2\) and \(x_3\) to every vertex of \(P_{23}\).
Finally, add the edge \(x_1x_2\). There are no other edges. Thus,
\(x_1x_2\) is the only edge in the subgraph induced by \(X\).
Figure~\ref{fig:G20} illustrates the construction.

\begin{figure*}[t]
	\centering
	\resizebox{0.90\textwidth}{!}{%
		\begin{tikzpicture}[
			bigv/.style={
				circle,
				draw,
				fill=white,
				inner sep=1pt,
				minimum size=7mm,
				font=\small
			},
			smallv/.style={
				circle,
				draw,
				fill=white,
				inner sep=0.4pt,
				minimum size=4.5mm,
				font=\scriptsize
			},
			pathedge/.style={
				line width=0.5pt
			},
			joinedge/.style={
				line width=0.28pt,
				gray!75
			},
			hubedge/.style={
				line width=0.8pt
			},
			every node/.style={font=\small}
			]
			
			% Main vertices
			\node[bigv] (x1) at (0,4.1) {$x_1$};
			\node[bigv] (x2) at (-4.2,-0.9) {$x_2$};
			\node[bigv] (x3) at (4.2,-0.9) {$x_3$};
			
			% The additional edge x_1x_2
			\draw[hubedge]
			(x1) .. controls (-4.8,4.4) and (-5.3,0.2) .. (x2);
			
			% Path P12
			\node[smallv] (a1) at (-1.45,2.95) {$1$};
			\node[smallv] (a2) at (-1.55,2.35) {$2$};
			\node[smallv] (a3) at (-1.65,1.75) {$3$};
			\node[smallv] (a4) at (-1.75,1.15) {$4$};
			\node[smallv] (a5) at (-1.85,0.55) {$5$};
			\node[smallv] (a6) at (-1.95,-0.05) {$6$};
			\node at (-2.25,2.95) {$P_{12}$};
			
			% Path P13
			\node[smallv] (b1) at (1.45,2.95) {$1$};
			\node[smallv] (b2) at (1.55,2.35) {$2$};
			\node[smallv] (b3) at (1.65,1.75) {$3$};
			\node[smallv] (b4) at (1.75,1.15) {$4$};
			\node[smallv] (b5) at (1.85,0.55) {$5$};
			\node[smallv] (b6) at (1.95,-0.05) {$6$};
			\node at (2.25,2.95) {$P_{13}$};
			
			% Path P23
			\node[smallv] (c1) at (-0.65,-1.55) {$1$};
			\node[smallv] (c2) at (-0.39,-2.05) {$2$};
			\node[smallv] (c3) at (-0.13,-2.55) {$3$};
			\node[smallv] (c4) at (0.13,-3.05) {$4$};
			\node[smallv] (c5) at (0.39,-3.55) {$5$};
			\node[smallv] (c6) at (0.65,-4.05) {$6$};
			\node at (0.00,-1.05) {$P_{23}$};
			
			% Edges of the three paths
			\draw[pathedge] (a1)--(a2)--(a3)--(a4)--(a5)--(a6);
			\draw[pathedge] (b1)--(b2)--(b3)--(b4)--(b5)--(b6);
			\draw[pathedge] (c1)--(c2)--(c3)--(c4)--(c5)--(c6);
			
			% Join edges from x1 and x2 to P12
			\foreach \name in {a1,a2,a3,a4,a5,a6}{
				\draw[joinedge]
				(x1) .. controls ($(x1)!0.45!(\name)+(0.65,0.20)$)
				and ($(\name)+(0.42,0.05)$) .. (\name);
				\draw[joinedge]
				(x2) .. controls ($(x2)!0.45!(\name)+(-0.70,-0.10)$)
				and ($(\name)+(-0.42,-0.05)$) .. (\name);
			}
			
			% Join edges from x1 and x3 to P13
			\foreach \name in {b1,b2,b3,b4,b5,b6}{
				\draw[joinedge]
				(x1) .. controls ($(x1)!0.45!(\name)+(-0.65,0.20)$)
				and ($(\name)+(-0.42,0.05)$) .. (\name);
				\draw[joinedge]
				(x3) .. controls ($(x3)!0.45!(\name)+(0.70,-0.10)$)
				and ($(\name)+(0.42,-0.05)$) .. (\name);
			}
			
			% Join edges from x2 and x3 to P23
			\foreach \name in {c1,c2,c3,c4,c5,c6}{
				\draw[joinedge]
				(x2) .. controls ($(x2)!0.45!(\name)+(-0.75,-0.20)$)
				and ($(\name)+(-0.45,0)$) .. (\name);
				\draw[joinedge]
				(x3) .. controls ($(x3)!0.45!(\name)+(0.75,-0.20)$)
				and ($(\name)+(0.45,0)$) .. (\name);
			}
			
		\end{tikzpicture}%
	}
	\caption{A full-edge illustration of the planar graph \(G\). The six
		vertices labeled \(1,\ldots,6\) on \(P_{12}\), \(P_{13}\), and
		\(P_{23}\) represent \(v_{12}^{1},\ldots,v_{12}^{6}\),
		\(v_{13}^{1},\ldots,v_{13}^{6}\), and
		\(v_{23}^{1},\ldots,v_{23}^{6}\), respectively. The bold edge is the
		additional edge \(x_1x_2\).}
	\label{fig:G20}
\end{figure*}

\begin{lemma}
	\label{lem:G20-planar}
	The graph \(G\) is planar.
\end{lemma}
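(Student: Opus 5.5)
We will prove Lemma~\ref{lem:G20-planar} by an explicit embedding, assembled from three planar blocks. For each pair \(\{i,j\}\subseteq\{1,2,3\}\) with \(i<j\), consider the subgraph \(B_{ij}=P_{ij}\vee\{x_i,x_j\}\), which is a copy of \(P_6\vee\overline{K_2}\).

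First we show that each \(B_{ij}\) is planar. Draw \(P_{ij}\) as a horizontal segment, put \(x_i\) above it and \(x_j\) below it, and join each hub to all six path vertices by straight segments. The result is a plane graph in which every face is a triangle, except for one outer face. That outer face has boundary \(x_i v_{ij}^1 x_j v_{ij}^6 x_i\), and both \(x_i\) and \(x_j\) lie on it. The key property is that the whole block can be drawn inside a thin closed region (a ``lens'') whose boundary meets the block only at \(x_i\) and \(x_j\).

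Next we glue the three blocks. Place \(x_1,x_2,x_3\) as the corners of a triangle in the plane, as in Figure~\ref{fig:G20}. Draw \(B_{12}\) inside a lens joining \(x_1\) and \(x_2\), and draw \(B_{13}\) and \(B_{23}\) in lenses along the other two sides. The lenses can be chosen pairwise internally disjoint, since they are neighbourhoods of three sides of a triangle and meet only at shared corners. The blocks share only hub vertices, so the union of the three drawings has no crossings. Finally, draw the edge \(x_1x_2\) as an arc in the unbounded region, outside the lens of \(B_{12}\) and away from the other lenses; this matches the bold curve in the figure. Alternatively, it can be drawn inside the lens, along the outer face of \(B_{12}\), which already contains both \(x_1\) and \(x_2\) on its boundary. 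This realizes exactly the edge set of \(G\), and no extra edges are added.

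The only point needing care is that the faces used for gluing really contain the required hubs on their boundaries: the outer face of each block must contain its two hubs, and the outer face of \(B_{12}\) must allow the extra edge. Both facts follow from the explicit drawing. If a combinatorial proof is preferred instead, one can note that \(G+x_2x_3+x_1x_3\) is obtained from \(K_3\) by placing, on each edge, a \(P_6\) joined to that edge's endpoints. This is a sequence of planar gluings along edges (\(2\)-sums with the edge kept), and planarity is preserved under \(2\)-sums. Since \(G\) is a subgraph of this graph, \(G\) is planar.
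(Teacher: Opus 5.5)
Your proof is correct and follows essentially the same route as the paper: you embed each copy of \(P_6\vee\overline{K_2}\) with both hubs on one face, place the three copies in closed discs (your lenses) with pairwise disjoint interiors that meet only at \(x_1,x_2,x_3\), and route \(x_1x_2\) along an unused \(x_1\)--\(x_2\) arc. Your alternative argument is also valid and goes by a different route: you note that \(G\) is a subgraph of the graph obtained from \(K_3\) by \(2\)-sums with three copies of the planar triangulation \(P_6\vee K_2\), so it replaces the geometric disc-gluing with a standard combinatorial fact.
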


\begin{proof}
	First omit the edge \(x_1x_2\). The subgraph induced by
	\(\{x_1,x_2\}\cup V(P_{12})\), the subgraph induced by
	\(\{x_1,x_3\}\cup V(P_{13})\), and the subgraph induced by
	\(\{x_2,x_3\}\cup V(P_{23})\) are each isomorphic to
	\(P_6\vee\overline{K_2}\). Each of these graphs has a plane embedding in
	which the two vertices outside the path lie on a common face: place the
	six vertices of the path in order on a curve, place the two remaining
	vertices on opposite sides of the curve, and draw the two fans to the
	path vertices on the two sides.
	
	Choose three closed discs with pairwise disjoint interiors, one for
	each of the pairs \(\{x_1,x_2\}\), \(\{x_1,x_3\}\), and
	\(\{x_2,x_3\}\), such that the two corresponding vertices lie on the
	boundary of each disc. Draw the appropriate copy of
	\(P_6\vee\overline{K_2}\) inside each disc. The drawings meet only at
	the hubs \(x_1,x_2,x_3\). The boundary arcs of the first disc leave
	one \(x_1\)-to-\(x_2\) arc unused; draw the additional edge
	\(x_1x_2\) along this arc. No crossing is created, so \(G\) is
	planar.
\end{proof}

\subsection{Colors on an individual path}

Let \(\varphi\) be a strong odd coloring of \(G\). For a color
\(\alpha\), let \(V_\alpha=\{v\in V(G):\varphi(v)=\alpha\}\). Define
\[
\begin{aligned}
	m_{12}(\alpha)&=|V_\alpha\cap V(P_{12})|,\\
	m_{13}(\alpha)&=|V_\alpha\cap V(P_{13})|,\\
	m_{23}(\alpha)&=|V_\alpha\cap V(P_{23})|.
\end{aligned}
\]

\begin{lemma}
	\label{lem:G20-path-multiplicity}
	For every color \(\alpha\), each of
	\(m_{12}(\alpha)\), \(m_{13}(\alpha)\), and \(m_{23}(\alpha)\) is at most
	\(2\).
\end{lemma}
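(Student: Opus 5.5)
By symmetry it suffices to treat \(P_{12}\), since each path is adjacent to exactly two hubs and the argument uses only one of them. The hub \(x_1\) is adjacent to every vertex of \(P_{12}\). Its other neighbours are the six vertices of \(P_{13}\) and the vertex \(x_2\).

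The plan is to look at the count of \(\alpha\) inside \(N(x_3)\) together with \(N(x_1)\), rather than at \(N(x_1)\) alone, because \(x_1\) sees vertices outside \(P_{12}\) and its count of \(\alpha\) is not just \(m_{12}(\alpha)\). A cleaner route is to use the path vertices themselves as witnesses. Every vertex \(v_{12}^{i}\) has neighbourhood \(\{x_1,x_2\}\) together with its path neighbours. So the restriction to \(P_{12}\) must already be proper, and no colour can appear exactly twice among path neighbours unless a hub also carries it. The hubs \(x_1,x_2\) are adjacent to all of \(P_{12}\), so no path vertex has a hub's colour. Hence inside each \(N(v_{12}^{i})\), the path neighbours \(v_{12}^{i-1},v_{12}^{i+1}\) must receive distinct colours. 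Thus vertices at distance \(1\) or \(2\) along the path receive distinct colours.

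Next, suppose \(m_{12}(\alpha)\ge 3\). The colour-\(\alpha\) vertices on \(P_{12}\) are pairwise at path-distance at least \(3\). Three of them therefore occupy positions like \(\{1,4,\ldots\}\); on six vertices the only option for three is impossible, since the positions would need to reach \(1,4,7\). So \(m_{12}(\alpha)\le 2\). This already finishes the proof, and it does not even require \(x_1\)'s parity condition.

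The one point needing care is the claim that \(\alpha\) cannot sit on a hub adjacent to the path. This holds by properness, because each hub is adjacent to every path vertex. With that, the count of \(\alpha\) in \(N(v_{12}^{i})\) equals the number of its path neighbours coloured \(\alpha\). That number must not be \(2\), which forces \(\varphi(v^{i-1})\ne\varphi(v^{i+1})\). There is no real obstacle here; the parity input from the hubs \(x_1,x_2\) is presumably what later lemmas use to strengthen this to \(m\le1\) and to rule out repeated colours across different paths.
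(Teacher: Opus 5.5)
Your proof is correct and is essentially the paper's argument: properness keeps $\alpha$ off both hubs of the path, so if two path vertices at distance two shared $\alpha$, their middle vertex would see $\alpha$ exactly twice; hence same-coloured path vertices are pairwise at distance at least three, and a $P_6$ holds at most two of them. The opening detour through $N(x_1)$ and $N(x_3)$ is unnecessary, and your symmetry remark is slightly off: the argument uses both hubs of the path (each excluded from $\alpha$ by properness), not just one.
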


\begin{proof}
	We prove the claim for \(P_{12}\); the arguments for \(P_{13}\) and
	\(P_{23}\) are identical. Two consecutive vertices of \(P_{12}\) cannot
	receive the same color because \(\varphi\) is proper. Two vertices at
	distance two on \(P_{12}\) cannot receive the same color either. Indeed,
	their common neighbour \(P_{12}\) would then see that color exactly
	twice. Moreover, neither \(x_1\) nor \(x_2\) can have that same color,
	because both \(x_1\) and \(x_2\) are adjacent to every vertex of
	\(P_{12}\). Thus, the color would appear exactly twice in the neighbourhood
	of the common neighbour, contradicting the strong odd condition.
	
	Consequently, vertices of one color on \(P_{12}\) are pairwise at
	distance at least three. A path on six vertices contains at most two such
	vertices. Therefore, \(m_{12}(\alpha)\le 2\). The same proof gives
	\(m_{13}(\alpha)\le 2\) and \(m_{23}(\alpha)\le 2\).
\end{proof}

\subsection{The path vertices form a rainbow set}

\begin{lemma}
	\label{lem:G20-rainbow}
	In every strong odd coloring of \(G\), the \(18\) vertices in
	\(V(P_{12})\cup V(P_{13})\cup V(P_{23})\) receive pairwise distinct
	colors. Moreover, neither \(\varphi(x_1)\) nor \(\varphi(x_2)\) appears
	on a path vertex.
\end{lemma}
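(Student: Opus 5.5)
The plan is a parity count over the three neighbourhoods of \(x_1,x_2,x_3\), fed by Lemma~\ref{lem:G20-path-multiplicity}. From the construction,
\[
N(x_1)=V(P_{12})\cup V(P_{13})\cup\{x_2\},\quad
N(x_2)=V(P_{12})\cup V(P_{23})\cup\{x_1\},\quad
N(x_3)=V(P_{13})\cup V(P_{23}).
\]
Fix a strong odd coloring \(\varphi\) and a color \(\alpha\). We write \(\delta_i(\alpha)=1\) if \(\varphi(x_i)=\alpha\) and \(0\) otherwise. The strong odd condition at \(x_1,x_2,x_3\) says that each of the three numbers
\[
m_{12}(\alpha)+m_{13}(\alpha)+\delta_2(\alpha),\qquad
m_{12}(\alpha)+m_{23}(\alpha)+\delta_1(\alpha),\qquad
m_{13}(\alpha)+m_{23}(\alpha)
\]
is either \(0\) or odd. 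Properness also gives some zeros directly. Since \(x_1\) is adjacent to all of \(P_{12}\cup P_{13}\), \(\varphi(x_1)\) has \(m_{12}=m_{13}=0\). Since \(x_2\) is adjacent to all of \(P_{12}\cup P_{23}\), \(\varphi(x_2)\) has \(m_{12}=m_{23}=0\). Finally \(\varphi(x_1)\ne\varphi(x_2)\) because of the edge \(x_1x_2\).

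First take \(\alpha\notin\{\varphi(x_1),\varphi(x_2)\}\), so both \(\delta\)'s vanish. The three sums then add up to \(2(m_{12}+m_{13}+m_{23})\), which is even. Three odd numbers cannot have an even sum, so at least one of the sums is \(0\), and therefore two of the three multiplicities vanish. Say \(m_{12}=m_{13}=0\). The remaining multiplicity \(m_{23}\) then equals the sum for \(x_2\) (and for \(x_3\)), so it is \(0\) or odd. By Lemma~\ref{lem:G20-path-multiplicity} it is at most \(2\), hence at most \(1\). So \(\alpha\) occurs on at most one of the \(18\) path vertices.

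Next take \(\alpha=\varphi(x_1)\). Here \(m_{12}=m_{13}=0\) and \(\delta_1=1\). The condition at \(x_2\) says \(m_{23}+1\) is \(0\) or odd, so \(m_{23}\) is even. The condition at \(x_3\) says \(m_{23}\) is \(0\) or odd. Together these force \(m_{23}=0\), so \(\varphi(x_1)\) appears on no path vertex.

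The case \(\alpha=\varphi(x_2)\) is symmetric, using \(x_1\) and \(x_3\). Here \(m_{12}=m_{23}=0\) and \(\delta_2=1\). The condition at \(x_1\) makes \(m_{13}\) even, and the condition at \(x_3\) makes it \(0\) or odd, so \(m_{13}=0\). This handles all colors and gives both assertions of the lemma.

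The only step needing care is the parity observation that three odd numbers cannot sum to an even number. It must be combined with the bound \(m\le 2\) from Lemma~\ref{lem:G20-path-multiplicity}, which removes the possibility of a single path contributing two vertices of one color. Everything else is bookkeeping of the neighbourhoods.
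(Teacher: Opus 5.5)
Your proof is correct and follows essentially the same route as the paper: the parity conditions at \(x_1,x_2,x_3\) combined with Lemma~\ref{lem:G20-path-multiplicity}, with \(\varphi(x_1)\) and \(\varphi(x_2)\) treated separately. Your one observation that the three sums cannot all be odd replaces the paper's separate ``exactly two positive'' and ``all three positive'' subcases. Because \(x_3\) has no neighbour in \(X\), you correctly fold \(\varphi(x_3)\) into the generic case and never need the paper's preliminary fact that \(x_1,x_2,x_3\) receive distinct colors.
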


\begin{proof}
	First, observe that \(x_1,x_2,x_3\) receive pairwise distinct colors.
	The vertices \(x_1\) and \(x_2\) are adjacent, so
	\(\varphi(x_1)\neq\varphi(x_2)\). If
	\(\varphi(x_1)=\varphi(x_3)\), then every vertex of \(P_{13}\) sees this
	color exactly twice, once on \(x_1\) and once on \(x_3\), which
	contradicts the strong odd condition. Similarly,
	\(\varphi(x_2)\neq\varphi(x_3)\).
	
	Fix a color \(\alpha\), and write
	\(m_{12}=m_{12}(\alpha)\), \(m_{13}=m_{13}(\alpha)\), and
	\(m_{23}=m_{23}(\alpha)\). Suppose first that
	\(\alpha\notin\{\varphi(x_1),\varphi(x_2),\varphi(x_3)\}\). At
	\(x_1,x_2,x_3\), respectively, the numbers of neighbors colored
	\(\alpha\) are \(m_{12}+m_{13}\), \(m_{12}+m_{23}\), and
	\(m_{13}+m_{23}\). Therefore, each of these three sums is either zero or
	odd.
	
	We claim that at most one of \(m_{12},m_{13},m_{23}\) is positive.
	Suppose exactly two are positive, say \(m_{12}>0\), \(m_{13}>0\), and
	\(m_{23}=0\). The condition at \(x_2\) implies that \(m_{12}\) is odd,
	and the condition at \(x_3\) implies that \(m_{13}\) is odd. Hence,
	\(m_{12}+m_{13}\) is positive and even, contradicting the condition at
	\(x_1\). The other choices of two positive values are identical.
	
	If all three values are positive, then the strong odd conditions at
	\(x_1,x_2,x_3\) give
	\(
	m_{12}+m_{13}\equiv
	m_{12}+m_{23}\equiv
	m_{13}+m_{23}\equiv 1\pmod 2.
	\)
	Adding these three congruences gives \(0\equiv 1\pmod 2\), which is
	impossible. Thus exactly one of \(m_{12},m_{13},m_{23}\) is positive
	whenever \(\alpha\) appears on a path. The strong odd condition at the
	two vertices adjacent to that path implies that the positive value is
	odd. By Lemma~\ref{lem:G20-path-multiplicity}, this positive value is
	equal to \(1\).
	
	It remains to consider colors used on \(X\). Let
	\(\alpha=\varphi(x_1)\). By properness, \(\alpha\) does not appear on
	\(P_{12}\) or \(P_{13}\), and hence it can appear only on \(P_{23}\). If
	\(m_{23}>0\), then \(x_3\) sees \(\alpha\) exactly \(m_{23}\) times, so
	\(m_{23}\) must be odd. On the other hand, \(x_2\) sees \(\alpha\) on
	\(x_1\) and on the \(m_{23}\) vertices of \(P_{23}\). Therefore,
	\(1+m_{23}\) must be odd, so \(m_{23}\) must be even, a contradiction.
	Hence, \(\varphi(x_1)\) does not appear on any path.
	
	The same argument, with the roles of \(x_1\) and \(x_2\) interchanged,
	shows that \(\varphi(x_2)\) does not appear on any path. Finally, let
	\(\alpha=\varphi(x_3)\). By properness, \(\alpha\) does not appear on
	\(P_{13}\) or \(P_{23}\), and it may appear only on \(P_{12}\). If it
	appears there, then the strong odd condition at both \(x_1\) and \(x_2\)
	implies that \(m_{12}\) is odd. By
	Lemma~\ref{lem:G20-path-multiplicity}, we get \(m_{12}=1\).
	
	It follows that every color appears on at most one of the \(18\) path
	vertices. Thus the path vertices receive pairwise distinct colors, and
	the colors of \(x_1\) and \(x_2\) are absent from all three paths.
\end{proof}

\subsection{The lower bound}

\begin{lemma}
	\label{lem:G20-lower}
	Every strong odd coloring of \(G\) uses at least \(20\) colors.
\end{lemma}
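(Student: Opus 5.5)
This lemma should follow almost immediately from Lemma~\ref{lem:G20-rainbow}. The idea is to exhibit \(20\) vertices whose colors are forced to be pairwise distinct: the \(18\) path vertices together with \(x_1\) and \(x_2\).

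Let \(\varphi\) be an arbitrary strong odd coloring of \(G\). By Lemma~\ref{lem:G20-rainbow}, the \(18\) vertices of \(V(P_{12})\cup V(P_{13})\cup V(P_{23})\) receive pairwise distinct colors. Hence \(\varphi\) uses at least \(18\) colors on these paths.

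Next, \(\varphi(x_1)\ne\varphi(x_2)\), since \(x_1x_2\) is an edge and \(\varphi\) is proper. By the second assertion of Lemma~\ref{lem:G20-rainbow}, neither \(\varphi(x_1)\) nor \(\varphi(x_2)\) occurs on any path vertex. So these are two further colors, distinct from each other and from all \(18\) path colors. This gives at least \(18+2=20\) colors in \(\varphi\). Since \(\varphi\) was arbitrary, \(\chi_{\mathrm{so}}(G)\ge 20\).

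There is essentially no obstacle here; all the real work was done in Lemmas~\ref{lem:G20-path-multiplicity} and~\ref{lem:G20-rainbow}. The only point to state carefully is that it is the edge \(x_1x_2\) which makes the two hub colors distinct. Note also that \(x_3\) adds nothing to the count, because \(\varphi(x_3)\) may be reused once on \(P_{12}\). This is exactly why the bound is \(20\) rather than \(21\), and the matching upper bound, proved separately, should exploit that reuse.
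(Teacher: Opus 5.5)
Your proof is correct and follows essentially the same route as the paper. You use Lemma~\ref{lem:G20-rainbow} to get \(18\) distinct path colors, observe that the edge \(x_1x_2\) forces \(\varphi(x_1)\ne\varphi(x_2)\) and that neither hub color appears on a path, and conclude \(18+2=20\), noting as the paper does that \(x_3\) adds nothing because its color may be reused on \(P_{12}\).
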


\begin{proof}
	By Lemma~\ref{lem:G20-rainbow}, the \(18\) path vertices receive
	pairwise distinct colors. The colors of \(x_1\) and \(x_2\) do not
	appear on any path vertex, and they are distinct because \(x_1x_2\in
	E(G)\). Therefore \(x_1\) and \(x_2\) require two additional colors. The
	color of \(x_3\) may coincide with the color of one vertex of \(P_{12}\),
	but this cannot reduce the number of colors below \(18+2=20\). Hence,
	\(\chi_{\mathrm{so}}(G)\ge 20\).
\end{proof}

\subsection{A strong odd 20-coloring}

\begin{lemma}
	\label{lem:G20-upper}
	The graph \(G\) admits a strong odd coloring using \(20\) colors.
\end{lemma}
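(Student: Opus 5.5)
We exhibit an explicit coloring and check the strong odd condition at every vertex. Give the \(18\) path vertices \(18\) pairwise distinct colors \(1,\ldots,18\). Give \(x_1\) color \(19\) and \(x_2\) color \(20\). For \(x_3\) we reuse a path color, as Lemma~\ref{lem:G20-rainbow} permits: \(x_3\) may share its color with one vertex of \(P_{12}\), say \(\varphi(x_3)=\varphi(v_{12}^{1})\). We could instead give \(x_3\) a \(21\)st color, but we need exactly \(20\), so the reuse is necessary.

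First we check properness. Path vertices are pairwise distinct and differ from \(19,20\). Also \(x_1\) and \(x_2\) differ. The color of \(x_3\) lies on \(v_{12}^{1}\), and \(v_{12}^{1}\) is not adjacent to \(x_3\), since \(x_3\) is adjacent only to \(P_{13}\) and \(P_{23}\).

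Next we verify the parity condition vertex by vertex.

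At \(x_1\), the neighbours are \(x_2\) and the vertices of \(P_{12}\cup P_{13}\), and all their colors are distinct. So every color appears once.

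At \(x_2\), the neighbours are \(x_1\) and the vertices of \(P_{12}\cup P_{23}\). They are again pairwise distinct, since \(x_3\) is not a neighbour of \(x_2\).

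At \(x_3\), the neighbours are the vertices of \(P_{13}\cup P_{23}\), which have distinct colors.

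At a vertex of \(P_{12}\), the neighbours are \(x_1\), \(x_2\) and at most two path neighbours, with colors \(19\), \(20\) and two distinct path colors. These are all distinct.

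At a vertex of \(P_{13}\), the neighbours are \(x_1\) (color \(19\)), \(x_3\) (color \(\varphi(v_{12}^1)\)) and path neighbours in \(P_{13}\). Since \(v_{12}^{1}\notin P_{13}\), the color \(\varphi(v_{12}^1)\) appears only once. The case of \(P_{23}\) is symmetric, with \(x_2\) in place of \(x_1\).

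Hence every color appears at most once in each open neighbourhood, so the coloring is strong odd and uses \(20\) colors.

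The only delicate point is choosing \(x_3\)'s color so that it is absent from every neighbourhood that \(x_3\) shares. The colour must avoid \(P_{13}\cup P_{23}\), and it must avoid coinciding with anything seen alongside \(x_3\). Neighbourhoods containing \(x_3\) are those of the vertices of \(P_{13}\cup P_{23}\), and these never contain \(P_{12}\) vertices. So any \(P_{12}\) color works, and this is the step I would double-check. Combined with Lemma~\ref{lem:G20-lower} and Lemma~\ref{lem:G20-planar}, this gives Theorem~\ref{thm:intro-G20}.
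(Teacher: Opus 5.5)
Your proposal is correct and follows essentially the same route as the paper. Both use 18 distinct colors on the path vertices, two new colors on \(x_1,x_2\), and \(\varphi(x_3)=\varphi(v_{12}^{1})\), then check properness and that every color appears at most once in each open neighbourhood.
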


\begin{proof}
	Assign pairwise distinct colors to the \(18\) path vertices. Let
	\(c_{12}^{r}\), \(c_{13}^{r}\), and \(c_{23}^{r}\) be the colors assigned
	to \(v_{12}^{r}\), \(v_{13}^{r}\), and \(v_{23}^{r}\), respectively, for
	\(r=1,2,\ldots,6\). Let \(a\) and \(b\) be two new colors not used on any
	path vertex, and define \(\varphi(x_1)=a\), \(\varphi(x_2)=b\), and
	\(\varphi(x_3)=c_{12}^{1}\). This coloring uses exactly \(20\) colors.
	
	The coloring is proper. Indeed, \(x_1\) and \(x_2\) receive two distinct
	new colors, and \(x_3\) receives a color used on \(P_{12}\), whose
	vertices are not adjacent to \(x_3\).
	
	Every color appearing in \(N_G(x_1)\) appears exactly once, since
	\(N_G(x_1)=\{x_2\}\cup V(P_{12})\cup V(P_{13})\), and these vertices
	have pairwise distinct colors. The same argument applies to \(x_2\).
	Also, \(N_G(x_3)=V(P_{13})\cup V(P_{23})\), and all vertices in this
	neighbourhood have pairwise distinct colors.
	
	Now consider a vertex \(v\in V(P_{12})\). Its neighbourhood consists of
	\(x_1,x_2\) and one or two neighbouring vertices on \(P_{12}\). These
	neighbours have pairwise distinct colors. A vertex of \(P_{13}\) has as
	neighbours \(x_1,x_3\) and one or two neighboring vertices on \(P_{13}\);
	again, all these colors are distinct. A vertex of \(P_{23}\) has as
	neighbours \(x_2,x_3\) and one or two neighbouring vertices on \(P_{23}\),
	and these colors are also distinct. Therefore, every color present in
	every open neighbourhood appears exactly once. Hence, \(\varphi\) is a
	strong odd \(20\)-coloring of \(G\), and so
	\(\chi_{\mathrm{so}}(G)\le 20\).
\end{proof}

\begin{theorem}
	\label{thm:planar-strong-odd-20}
	There exists a planar graph \(G\) such that
	\(
	\chi_{\mathrm{so}}(G)=20.
	\)
\end{theorem}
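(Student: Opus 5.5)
The theorem will follow by assembling the lemmas of this section for the explicit graph \(G\) built from \(X=\{x_1,x_2,x_3\}\) and the paths \(P_{12},P_{13},P_{23}\). The plan is as follows.

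\begin{enumerate}
\item First, Lemma~\ref{lem:G20-planar} shows that \(G\) is planar, so \(G\) is an admissible witness.
\item Second, Lemma~\ref{lem:G20-lower} gives \(\chi_{\mathrm{so}}(G)\ge 20\). That lemma rests on Lemma~\ref{lem:G20-rainbow}: the \(18\) path vertices are rainbow, and the two colors \(\varphi(x_1)\ne\varphi(x_2)\) are disjoint from every path color. This already forces \(18+2\) distinct colors.
\item Third, Lemma~\ref{lem:G20-upper} supplies an explicit strong odd \(20\)-coloring, so \(\chi_{\mathrm{so}}(G)\le 20\).
\end{enumerate}

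Combining the two inequalities gives \(\chi_{\mathrm{so}}(G)=20\), which proves Theorem~\ref{thm:planar-strong-odd-20} and hence Theorem~\ref{thm:intro-G20}.

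The only genuinely delicate ingredient is the rainbow lemma, which has already been established; the theorem itself is just bookkeeping on top of it. Two points in that lemma carry the weight.

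\begin{itemize}
\item The parity argument at the three hubs. The condition \(m_{12}+m_{13}\), \(m_{12}+m_{23}\), \(m_{13}+m_{23}\in\{0\}\cup(2\mathbb Z+1)\) rules out any color meeting two or three paths. The strong odd condition at the two hubs adjacent to a single path then forces each multiplicity to be odd, and Lemma~\ref{lem:G20-path-multiplicity} caps it at \(2\), so it equals \(1\).
\item The asymmetry created by the edge \(x_1x_2\). This edge is what forbids reusing \(\varphi(x_1)\) or \(\varphi(x_2)\) on \(P_{23}\) or \(P_{13}\), respectively, while still allowing \(\varphi(x_3)\) to be reused on \(P_{12}\).
\end{itemize}

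For completeness, I would recheck the upper-bound coloring at the vertex \(x_3\). Its neighbourhood is \(V(P_{13})\cup V(P_{23})\), and it deliberately avoids \(P_{12}\), where the color \(\varphi(x_3)=c_{12}^1\) lives. I would also recheck that each path vertex sees its hubs and its path-neighbours in pairwise distinct colors. Both checks are immediate from the choice of colors.
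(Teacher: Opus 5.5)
Your proposal is correct and matches the paper's proof: planarity comes from Lemma~\ref{lem:G20-planar}, the lower bound \(\chi_{\mathrm{so}}(G)\ge 20\) from Lemma~\ref{lem:G20-lower} (built on the rainbow Lemma~\ref{lem:G20-rainbow}), and the upper bound from the explicit coloring of Lemma~\ref{lem:G20-upper}. Your summaries of the hub-parity argument and of how the edge \(x_1x_2\) blocks reuse of \(\varphi(x_1)\) on \(P_{23}\) and of \(\varphi(x_2)\) on \(P_{13}\) (while still allowing \(\varphi(x_3)\) on \(P_{12}\)) are accurate.
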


\begin{proof}
	The graph \(G\) is planar by Lemma~\ref{lem:G20-planar}. By
	Lemma~\ref{lem:G20-lower}, every strong odd coloring of \(G\) uses at
	least \(20\) colors. By Lemma~\ref{lem:G20-upper}, the graph \(G\)
	admits a strong odd coloring using \(20\) colors. Therefore,
	\(\chi_{\mathrm{so}}(G)=20\).
\end{proof}

Theorem~\ref{thm:planar-strong-odd-20} gives a self-contained explicit
construction of a planar graph with strong odd chromatic number greater
than \(17\). This provides a fully verified construction in the direction
of Problem~1 of Manattu et al.~\cite{ManattuVinayLakshmanan2026}. We emphasize again that
the value \(20\) had already appeared in the earlier arXiv preprint of
Goetze et al.~\cite{GoetzeKluteKnauerParadaPenaUeckerdt2025}; our
contribution here is a different construction together with a complete
proof.

\section{Conclusion}
\label{sec:conclusion}

In this paper, we studied problems raised by Manattu et
al.~\cite{ManattuVinayLakshmanan2026} on the strong odd chromatic number
of planar graphs. Our main contribution is an improved universal upper
bound. The previous bound \(\chi_{\mathrm{so}}(G)\le 388\) follows from
the reduction \(\chi_{\mathrm{so}}(G)\le
\chi(G)\chi_{\mathrm{pfo}}(\mathcal P)\), the Four Color Theorem, and
the facially odd coloring bound \(\chi_{\mathrm{pfo}}(\mathcal P)\le 97\).
We improved the auxiliary facially odd coloring bound to
\(\chi_{\mathrm{pfo}}(\mathcal P)\le 92\). Consequently, every planar
graph \(G\) satisfies \(\chi_{\mathrm{so}}(G)\le 368\). Therefore, in the
form of Problem~2 asking for a constant \(c<388\) such that
\(\chi_{\mathrm{so}}(G)<c\) for every planar graph \(G\), one may take
\(c=369\).

We also presented an explicit planar graph \(G\) with
\(\chi_{\mathrm{so}}(G)=20\), together with a complete proof of the exact
value. This gives a direct and self-contained construction whose value
exceeds \(17\), the value appearing in Problem~1 of Manattu et
al.~\cite{ManattuVinayLakshmanan2026}. However, to avoid any ambiguity
about priority, we recall that Goetze et
al.~\cite{GoetzeKluteKnauerParadaPenaUeckerdt2025} had already posted an
arXiv preprint in May 2025 containing a planar example with a strong odd
chromatic number \(20\). Thus, our lower-bound section should be viewed as
a different explicit construction with full verification, rather than the
first proof of the existence of such a graph.

\begin{remark}
	Manattu et al.~\cite{ManattuVinayLakshmanan2026} also asked whether
	there exists a constant \(c\) such that only finitely many planar graphs
	\(G\) satisfy \(\chi_{\mathrm{so}}(G)>c\). The previously known universal
	bound \(388\) had already answered this question affirmatively by allowing
	\(c=388\); our result merely improves the admissible constant to \(368\).
	Moreover, since the problem is stated for planar graphs and not only for
	connected planar graphs, one should note that if a planar graph \(G\) with
	\(\chi_{\mathrm{so}}(G)>c\) exists, then the graphs \(G\cup qK_1\), where
	\(q\) is any integer with \(q\ge 0\), give infinitely many planar graphs
	with the same strong odd chromatic number. Indeed, adding isolated
	vertices does not change the strong odd chromatic number. Thus, in the
	class of all planar graphs, the finiteness question is essentially
	equivalent to finding a universal upper bound.
\end{remark}

The gap between the lower bound \(20\) and the upper bound \(368\)
remains large. We believe that the lower bound gives the correct extremal
value, and we therefore propose the following conjecture.

\begin{conjecture}
	Every planar graph \(G\) satisfies \(\chi_{\mathrm{so}}(G)\le 20\).
\end{conjecture}

Equivalently, the maximum value of \(\chi_{\mathrm{so}}(G)\) over all planar
graphs is \(20\). Proving this conjecture, or finding a planar graph with
strong odd chromatic number at least \(21\), remains the main open direction.
It would also be interesting to determine whether the facially odd coloring
approach can be refined to reduce the general upper bound substantially.

\bibliographystyle{plain}
%\addcontentsline{toc}{section}{Bibliography}
%\bibliographystyle{splncs04}
\bibliography{SOC_ref}

\end{document}